\theoremstyle{plain}
\newtheorem{theorem}{Theorem}[section]
\newtheorem{proposition}[theorem]{Proposition}
\newtheorem{lemma}[theorem]{Lemma}
\newtheorem{corollary}[theorem]{Corollary}
\newtheorem{conjecture}[theorem]{Conjecture}
\newtheorem*{claim*}{Claim}
\newcommand\BeginClaimProof{\begin{proof}[{Proof of Claim \theclaim}]}
\newcommand\EndClaimProof{\end{proof}}
\newtheorem{letterthm}{Theorem}
\newtheorem{lettercor}[letterthm]{Corollary}
\theoremstyle{definition}
\newtheorem{definition}[theorem]{Definition}
\newtheorem{question}[theorem]{Question}
\theoremstyle{remark}
\newtheorem*{remark}{Remark}
\DeclareMathOperator{\Aut}{Aut}
\DeclareMathOperator{\Out}{Out}
\DeclareMathOperator{\Mod}{Mod}
\DeclareMathOperator{\GL}{GL}
\newcommand\D{\Delta}
\newcommand\wh{\widehat}
\newcommand\Z{\mathbb Z}
\newcommand\G{\Gamma}
\newcommand\Q{\mathbb Q}
\newcommand{\punc}{\Sigma_{1,1}}
\def\-{\overline}
\def\wh{\widehat}
\def\g{\gamma}
\def\G{\Gamma}
\def\Z{\Bbb Z}
\def\Q{\Bbb Q}
\def\F{\Bbb F}
\def\e{\varepsilon}
\def\ker{\rm{ker}}
\def\PSL{\rm{PSL}(2,\Z)}
\def\GL{\rm{GL}(2,\Z)}
\def\PGL{\rm{PGL}(2,\Z)}
\def\Tor{\rm{Tor}}
\def\qed{ $\sqcup\!\!\!\!\sqcap$}
\def\tr{\mbox{\rm{tr}}\, }
\def\G{\Gamma}
\def\<{\langle}
\def\>{\rangle}
\def\Gp{\Gamma_{\phi}}
\newcommand{\Addresses}{{
  \bigskip
  \footnotesize

  M.~Bridson, \textsc{Mathematical Institute, Andrew Wiles Building, University of Oxford, Oxford OX2 6GG, UK}\par\nopagebreak
  \textit{E-mail address}, M.~Bridson: \texttt{bridson@maths.ox.ac.uk}

  \medskip

  A.~Reid, \textsc{Department of Mathematics, University of Texas, 1 University Station C1200, Austin, TX 78712-0257, USA}\par\nopagebreak
  \textit{E-mail address}, A.~Reid: \texttt{areid@math.utexas.edu}

  \medskip

  H. Wilton, \textsc{DPMMS, Centre for Mathematical Sciences, Wilberforce Road, Cambridge CB3 0WB, UK}\par\nopagebreak
  \textit{E-mail address}, H.~Wilton: \texttt{h.wilton@maths.cam.ac.uk}

}}
\begin{document}

\date{13 July 2017}

\title{Profinite rigidity and surface bundles over the circle}
\author{Martin R. Bridson, Alan W. Reid and Henry Wilton}

\maketitle

\begin{abstract} If $M$ is a compact 3-manifold whose first betti number is 1, and $N$ is a compact 3-manifold such that $\pi_1N$ and $\pi_1M$ have the same finite quotients, then $M$ fibres over the circle if and only if $N$ does. We prove that groups of the form $F_2\rtimes\Z$ are distinguished from one another by their profinite completions.
Thus, regardless of betti number, if $M$ and $N$ are punctured torus bundles  over the circle and $M$ is not homeomorphic to $N$, then there is a finite group $G$ such that one of $\pi_1M$ and $\pi_1N$ maps onto $G$ and the other does not. 
\end{abstract}

\section{Introduction}

When one wants to understand a finitely presented group it is natural to explore
its finite quotients, and this is a well-trodden path in many contexts.
For instance, one might try to prove that a
presentation does not represent the trivial group by exhibiting
a map onto a non-trivial finite group, or one might try to prove that
two groups are not isomorphic by counting maps to small finite groups. 
The potential of such techniques depends on the extent to which the groups being
studied are determined by the totality of their finite quotients. If 
the groups $\G$ at hand are residually finite, i.e.\ every finite subset injects into some finite quotient, 
then it is reasonable to expect that one will be able to detect many properties of $\G$ from
the totality of its finite quotients. 

Attempts to lend precision to this observation,
and to test its limitations, have surfaced repeatedly 
over the last forty years. There has been a particular resurgence of interest in recent years in the
context of low-dimensional topology, where the central problem is that of 
distinguishing between compact 3-manifolds $M$ and $N$ by
finding a finite quotient of $\pi_1M$ that is not a quotient of
$\pi_1N$. In more sophisticated terminology, one wants to develop a complete understanding of the
circumstances in which 
fundamental groups of non-homeomorphic manifolds $M$ and $N$ can have
isomorphic \emph{profinite completions} $\wh{\pi_1M}$ and
$\wh{\pi_1N}$. (The profinite completion $\wh{G}$ of a discrete
group $G$ is the inverse limit of the inverse system of finite quotients of $G$.)

There has been a good deal of progress on this question recently:
Boileau and Friedl \cite{boileau_profinite_2015}
proved, among other things, that for closed 3-manifolds with $H_1(M,\Z)\cong\Z$,
being fibred is an invariant of the profinite completion; using very different methods,
Bridson and Reid \cite{bridson_profinite_2015} proved that if $M$ is a compact manifold with
non-empty boundary that fibres and has first betti number 1, 
and if $N$ is a compact 3-manifold with $\wh{\pi_1N}\cong\wh{\pi_1M}$, then $N$ has non-empty boundary and
fibres; and if $\pi_1M$ has the form $F_r\rtimes\Z$, with $F_r$ free of rank $r$, then so does $\pi_1N$
(but we do not know if the actions of $\Z$ on $F_r$ can be different).
It follows, for example, that the complement of the 
figure-8 knot is distinguished from all other 3-manifolds by the
profinite completion of its fundamental group \cite{boileau_profinite_2015,bridson_profinite_2015}. 

In the negative direction, Funar \cite{funar_torus_2013}
pointed out that old results of Stebe \cite{stebe_conjugacy_1972b} imply that torus bundles over the circle with {\rm{Sol}}
geometry cannot, in general, be distinguished from one another by the profinite completions of their fundamental
groups $\Z^2\rtimes\Z$. By adapting arguments of Baumslag \cite{baumslag_residually_1974}, Hempel \cite{hempel_some_2014} exhibited a similar phenomenon among bundles with higher genus fibres and finite monodromy; see also \cite{wilkes_profinite_2015}.
 
In this paper we advance the understanding of profinite rigidity for surface bundles over the 
circle in two ways. First, taking up the theme of \cite{bridson_profinite_2015}, we show that in the case
of punctured-torus bundles over the circle, the monodromy of the bundle is determined by the profinite completion 
of the fundamental group and, moreover,
profinite rigidity persists if one drops the hypothesis
$b_1(M)=1$. Secondly, 
we extend the fibring theorems of Boileau--Friedl and
Bridson--Reid to the case of all bundles $M$ with compact fibre and $b_1(M)=1$ (Theorem \ref{closedprofinite}).

To state the first of these results more precisely,
we define $\punc$ to be the once-punctured torus, and for any $\phi$ in the extended mapping class group $\Mod^{\pm}(\punc)\cong{\rm{GL(2,\Z)}}\cong\Out(F_2)$, let $M_\phi$ be the mapping torus of (a homeomorphism representing) $\phi$. Let $F_2$ denote the non-abelian free  group of rank $2$.


\begin{letterthm}\label{thm1}
Let $\phi_1,\phi_2\in\Out(F_2)$ and let $\G_i=F_2\rtimes_{\phi_i}\Z$.
If  $\widehat{\Gamma}_{1}\cong\widehat{\Gamma}_{2}$, then  
$\phi_1$ is conjugate to $\phi_2$ in ${\rm{Out}}(F_2)={\rm{GL}}(2,\Z)$,   hence
$\G_{1}\cong\G_{2}$ and  $M_{\phi_1}$ is homeomorphic to $M_{\phi_2}$.
\end{letterthm}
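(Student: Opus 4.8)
The plan is to recover the $\mathrm{GL}(2,\Z)$-conjugacy class of the monodromy from $\widehat{\G}_i$, using the canonical isomorphism $\Out(F_2)\cong\mathrm{GL}(2,\Z)$ given by the action on $H_1(F_2;\Z)\cong\Z^2$; write $A_i\in\mathrm{GL}(2,\Z)$ for the image of $\phi_i$. Since $\G_i=F_2\rtimes_{\phi_i}\Z$ is the fundamental group of a compact $3$-manifold with boundary it is residually finite, and it is standard that $\widehat{\G}_i=\widehat{F_2}\rtimes_{\widehat{\phi}_i}\widehat{\Z}$. When $b_1(\G_i)=1$ --- equivalently, when $1$ is not an eigenvalue of $A_i$, which holds in particular whenever $\phi_i$ is Anosov --- the epimorphism $\G_i\to\Z$ is unique up to sign, so $H^1(\widehat{\G}_i;\widehat{\Z})$ is free of rank one over $\widehat{\Z}$ and the closure of the fibre, $\widehat{F_2}$, is the kernel of any generator; an isomorphism $\widehat{\G}_1\to\widehat{\G}_2$ therefore restricts to an isomorphism $\widehat{F_2}\to\widehat{F_2}$ and conjugates $\widehat{\phi}_1$ to $\widehat{\phi}_2^{\,u}$ in $\Out(\widehat{F_2})$ for some $u\in\widehat{\Z}^{\times}$. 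Modulo the finitely many cases with $b_1(\G_i)\ge 2$ --- where $A_i$ has eigenvalue $1$, so that $\phi_i$ is trivial, of finite order, or conjugate to $\pm\left(\begin{smallmatrix}1&n\\0&1\end{smallmatrix}\right)$, and which I would handle by the homological computations below --- the theorem then reduces to showing that $\widehat{\phi}_1\sim\widehat{\phi}_2^{\,u}$ in $\Out(\widehat{F_2})$ forces $\phi_1\sim\phi_2$ in $\Out(F_2)$.

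The second step is to read off the characteristic polynomial $t^2-\operatorname{tr}(A_i)\,t+\det(A_i)$, which is the Alexander polynomial of $\G_i$ relative to $\G_i\to\Z$ and, by the methods of Boileau--Friedl and Bridson--Reid, is a profinite invariant up to the substitution $t\mapsto t^{-1}$ --- the only $\widehat{\Z}^{\times}$-reparametrisation carrying an integral quadratic polynomial to one. Thus $\det(A_i)$ and $|\operatorname{tr}(A_i)|$ are recovered, as is the dynamical type of $\phi_i$: \emph{finite order} (so $\G_i$ is virtually $F_2\times\Z$ and $M_{\phi_i}$ is Seifert fibred); \emph{reducible of infinite order}, i.e.\ $A_i\sim\pm\left(\begin{smallmatrix}1&n\\0&1\end{smallmatrix}\right)$ with $n\ne 0$ (so $M_{\phi_i}$ is a graph manifold); or \emph{Anosov}, i.e.\ $\det A_i=1$ with $|\operatorname{tr}A_i|\ge 3$, or $\det A_i=-1$ with $\operatorname{tr}A_i\ne 0$ (so $M_{\phi_i}$ is a hyperbolic manifold with one cusp). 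In the finite-order case there are exactly seven conjugacy classes in $\mathrm{GL}(2,\Z)$, and one checks that these are distinguished by the isomorphism type of $H_1(\G_i;\Z)$, hence profinitely. In the reducible infinite-order case the sign is visible in $H_1(\G_i;\Z)$, which is $\Z^2\oplus\Z/n$ for the sign $+$ and has $b_1=1$ with torsion of order $4$ for the sign $-$, while $|n|$ is recovered from the first homology of the index-two subgroup $\ker(\G_i\to\Z\to\Z/2)\cong F_2\rtimes\Z$, whose monodromy is $\left(\begin{smallmatrix}1&2n\\0&1\end{smallmatrix}\right)$; since $H_1$ of $\G_i$ and of its finite-index subgroups are profinite invariants, this settles all non-Anosov cases, including those with $b_1(\G_i)\ge 2$ deferred above.

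The remaining and genuinely hard case is the Anosov one, where $M_{\phi_i}$ is a hyperbolic once-punctured-torus bundle and the characteristic polynomial does \emph{not} suffice: whenever $\operatorname{tr}(A_i)$ belongs to a real quadratic order with non-trivial class group there are several $\mathrm{GL}(2,\Z)$-conjugacy classes with this trace, and --- as Funar observed from Stebe's work --- the corresponding \emph{abelian}-fibre groups $\Z^2\rtimes_A\Z$ with these monodromies have isomorphic profinite completions, so no abelian or nilpotent data can distinguish the $A_i$. The distinguishing information must therefore come from the action of $\widehat{\phi}_i$ on the \emph{non-nilpotent} finite quotients of $\widehat{F_2}$, equivalently on the non-abelian finite-sheeted covers of $M_{\phi_i}$. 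Here I would use two ingredients. First, the peripheral subgroup $P\cong\Z^2$ of $\G_i$ is separable, by the LERF property of hyperbolic $3$-manifold groups (Agol--Wise), so its closure $\overline{P}\cong\widehat{\Z}^2$ is detected inside $\widehat{\G}_i$; combined with results in the style of Bridson--Reid recognising the peripheral structure of fibred $3$-manifolds, this makes the pair $(\widehat{\G}_i,\overline{P})$, with its preferred epimorphism to $\widehat{\Z}$, a profinite invariant, which recovers the $\phi_i$-action on the peripheral data --- the conjugacy class of the boundary word $[a,b]\in F_2$ together with the framing. Second, I would translate ``$\phi_1\sim\phi_2$ in $\mathrm{GL}(2,\Z)$'' into the combinatorial equivalence of the two Anosov cutting sequences --- equivalently, of the monodromy (Floyd--Hatcher) ideal triangulations of $M_{\phi_1}$ and $M_{\phi_2}$ --- and show that this datum can be read off from the enriched profinite information, for instance from the $\widehat{\phi}_i$-action on $\operatorname{Epi}(F_2,Q)/\operatorname{Inn}(Q)$ for a judicious family of finite groups $Q$. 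I expect essentially all the difficulty of the theorem to be concentrated in this last point: manufacturing a genuinely non-abelian, \emph{integral} invariant of the Anosov monodromy out of $\widehat{\G}_i$, precisely the kind of invariant that is unavailable in the \textrm{Sol} setting of Stebe and Funar. Once that is in hand, $M_{\phi_1}$ is homeomorphic to $M_{\phi_2}$, whence $\G_1\cong\G_2$ and $\phi_1\sim\phi_2$; alternatively, once enough of the geometry has been recovered, one may conclude via Mostow rigidity.
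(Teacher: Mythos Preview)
Your handling of the non-hyperbolic monodromies is close in spirit to the paper's, though a few claims need more care: $-I$ and $-U(n)$ (for $n$ even) share both $H_1(\Gamma;\Z)\cong\Z\oplus\Z/2\oplus\Z/2$ \emph{and} the characteristic polynomial $(t+1)^2$, so neither of your stated invariants separates them, and likewise $\Gamma_\epsilon$ and $\Gamma_{U(2)}$ have the same $H_1$ while $b_1>1$ obstructs your Alexander-polynomial argument. These can be patched (e.g.\ by passing to the double cover, as you do elsewhere), and the paper itself needs ad~hoc arguments at exactly these two points.

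The genuine gap is in the Anosov case. You have correctly diagnosed that abelian data cannot separate hyperbolic monodromies with the same trace and determinant, but your proposed remedy --- recover peripheral structure, then ``read off'' the cutting sequence from the $\widehat{\phi}_i$-action on $\mathrm{Epi}(F_2,Q)$ for suitable $Q$ --- is not a proof; it is a statement of what one would like to be true, and you say as much. The paper's argument is entirely different and bypasses any geometric reconstruction. The key input is Asada's theorem that $\Out(F_2)$ has the \emph{congruence subgroup property}: every finite quotient of $\Out(F_2)$ factors through $\Out(F_2/K)$ for some characteristic $K\lhd F_2$ of finite index. Combined with the omnipotence of the virtually free group $\Out(F_2)$, this means that for any two independent infinite-order elements $\phi_1,\phi_2$ one can find a characteristic $K$ so that their images in $\Out(F_2/K)$ have different orders. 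But when $b_1(\Gamma_i)=1$, an isomorphism $\widehat{\Gamma}_1\cong\widehat{\Gamma}_2$ forces the images of $\phi_1$ and $\phi_2$ to generate the \emph{same} cyclic subgroup of every $\Out(F_2/K)$; hence $\phi_1,\phi_2$ are not independent, i.e.\ $\phi_1^{n_1}$ is conjugate to $\phi_2^{\pm n_2}$ in $\Out(F_2)$. A second application of congruence omnipotence gives $n_1=n_2$, uniqueness of roots in $\mathrm{GL}(2,\Z)$ gives $\phi_1\sim\pm\phi_2^{\pm 1}$, and the torsion in $H_1(\Gamma_\phi;\Z)$ (which has order $|1+\det\phi-\operatorname{tr}\phi|$) rules out the sign ambiguity. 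The non-abelian invariant you were looking for is thus supplied, abstractly and all at once, by the congruence subgroup property for $\Out(F_2)$ --- this is the missing idea.
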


\begin{lettercor}
Let $M$ be a hyperbolic 3-manifold that fibres over the circle with fibre a one-holed torus, and let $N$ be
a compact connected 3-manifold. If $\wh{\pi_1N}\cong\wh{\pi_1M}$, then $N$ is homeomorphic to $M$. 
\end{lettercor}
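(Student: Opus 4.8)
The plan is to deduce the Corollary from Theorem~\ref{thm1}, the fibring theorem of Bridson--Reid \cite{bridson_profinite_2015}, and standard $3$-manifold rigidity. Since $M$ fibres over the circle with fibre the once-holed torus, $\pi_1 M\cong F_2\rtimes_{\phi_1}\Z$, where $\phi_1\in\Out(F_2)=\mathrm{GL}(2,\Z)$ is the monodromy of the bundle. Because $M$ is hyperbolic it is atoroidal, so $\phi_1$ is represented by a pseudo-Anosov map; equivalently $|\tr\phi_1|\ge 3$. In particular $1$ is not an eigenvalue of $\phi_1$ acting on $H_1(F_2)\cong\Z^2$, so $\det(\phi_1-\mathrm{id})\ne 0$; hence $b_1(M)=1$ and the torsion subgroup of $H_1(M;\Z)$ is the finite group $\Z^2/(\phi_1-\mathrm{id})\Z^2$.

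I would first invoke \cite{bridson_profinite_2015}: as $M$ is compact with non-empty boundary, fibres over $S^1$, has $b_1(M)=1$ and $\pi_1 M\cong F_2\rtimes\Z$, any compact $3$-manifold $N$ with $\wh{\pi_1 N}\cong\wh{\pi_1 M}$ has non-empty boundary, fibres over $S^1$, and satisfies $\pi_1 N\cong F_r\rtimes_{\phi_2}\Z$ for some $r\ge 1$ and $\phi_2\in\Aut(F_r)$. The next task is to pin down $r=2$. Since finite abelian quotients are visible in the profinite completion, $(\pi_1 N)^{\mathrm{ab}}\cong(\pi_1 M)^{\mathrm{ab}}$, so $b_1(N)=1$; thus $H^1(N;\Z)\cong\Z$, and $\pi_1 N$ admits a unique surjection onto $\Z$ up to sign, namely the projection with kernel $F_r$. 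As $F_r$ is finitely generated and normal in $\pi_1 N$, every finite-index subgroup of $F_r$ contains a subgroup characteristic in $F_r$ (intersect the finitely many subgroups of a given index), hence normal in $\pi_1 N$; so the profinite topology of $\pi_1 N$ induces the full profinite topology on $F_r$, and $1\to\wh{F_r}\to\wh{\pi_1 N}\to\wh{\Z}\to 1$ is exact. The same holds for $M$; as an isomorphism $\wh{\pi_1 N}\cong\wh{\pi_1 M}$ is compatible with the projections onto $\wh{\Z}$ (canonical, because $b_1=1$), it restricts to $\wh{F_r}\cong\wh{F_2}$, and comparing $\dim_{\F_p}H^1(-;\F_p)$ forces $r=2$.

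With $\pi_1 N\cong F_2\rtimes_{\phi_2}\Z$, $\phi_2\in\Out(F_2)=\mathrm{GL}(2,\Z)$, and $\wh{\pi_1 N}\cong\wh{\pi_1 M}$, Theorem~\ref{thm1} applies and shows $\phi_1$ and $\phi_2$ are conjugate in $\mathrm{GL}(2,\Z)$; in particular $\pi_1 N\cong\pi_1 M$ and $\phi_2$ is pseudo-Anosov. It remains to upgrade this group isomorphism to a homeomorphism. The manifold $N$ fibres over $S^1$ with fibre a compact surface $\Sigma$ having $\pi_1\Sigma\cong F_2$; thus $\Sigma$ has non-empty boundary and $\chi(\Sigma)<0$, so $N$ is a compact aspherical $3$-manifold with non-empty boundary whose fibre is incompressible, hence Haken. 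Moreover $\pi_1 N\cong\pi_1 M$ contains no non-peripheral $\Z^2$, so $N$ is atoroidal; by Thurston's hyperbolisation theorem for Haken manifolds, $N$ carries a complete hyperbolic structure of finite volume. Mostow--Prasad rigidity, applied to the isomorphism $\pi_1 N\cong\pi_1 M$, now yields an isometry, and in particular a homeomorphism, $N\cong M$.

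I expect the main obstacle within this assembly to be the identification of the fibre rank, i.e.\ the proof that $r=2$; it rests on the fact---standard, though not automatic---that the profinite topology of a group of the form $F_r\rtimes\Z$ induces the full profinite topology on the fibre $F_r$, together with the profinite invariance of $\dim_{\F_p}H^1(-;\F_p)$ for free groups. The substantial mathematical content is imported: Theorem~\ref{thm1} recognises the monodromy profinitely, while \cite{bridson_profinite_2015}, geometrisation and Mostow rigidity supply the fibring and the passage from groups to manifolds.
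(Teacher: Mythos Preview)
Your argument is essentially correct and follows the paper's arc: invoke the Bridson--Reid fibring theorem, apply Theorem~\ref{thm1}, then pass from groups to manifolds. Two deviations are worth flagging. First, your derivation that $r=2$ is redundant: \cite[Theorem~B]{bridson_profinite_2015}, as cited in the paper, already yields that the fibre of $N$ has Euler characteristic $-1$ and $\pi_1N\cong F_2\rtimes_{\phi_2}\Z$. Second, you finish via Thurston hyperbolisation and Mostow--Prasad, whereas the paper argues more directly: since the fibre $\Sigma$ has $\chi(\Sigma)=-1$, it is the once-holed torus or the pair of pants, but the pair of pants has finite mapping class group and so cannot carry the infinite-order (hyperbolic) monodromy $\phi_2$ conjugate to $\phi_1$; hence $\Sigma=\Sigma_{1,1}$ and $N\cong M_{\phi_2}\cong M_{\phi_1}=M$ as bundles with conjugate monodromy. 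This is shorter and avoids the soft spot in your route, namely the step ``$\pi_1N\cong\pi_1M$ contains no non-peripheral $\Z^2$, so $N$ is atoroidal'': the word \emph{peripheral} is relative to the manifold, and the abstract isomorphism does not a priori carry $\pi_1\partial M$ to $\pi_1\partial N$; you also need to exclude the Seifert-fibred case before hyperbolisation applies. Both points are fixable (the unique conjugacy class of maximal $\Z^2$'s in $\pi_1M$ forces exactly one boundary torus in $N$ and makes every $\Z^2$ peripheral there; and $F_2\rtimes_{\phi_1}\Z$ with $\phi_1$ hyperbolic has no infinite cyclic normal subgroup), but the paper's fibre identification makes them unnecessary.
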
 

To deduce this corollary, first observe
that  since $M$ is hyperbolic, its first betti number is $1$ and
 $\pi_1M$ has the form $F_2\rtimes_\phi\Z$ with $\phi\in{\rm{GL}}(2,\Z)$ a hyperbolic matrix.
\cite[Theorem B]{bridson_profinite_2015} 
states that $N$ has non-empty boundary,
is a bundle with compact fibre of euler characteristic $-1$, and
$\pi_1N\cong F_2\rtimes_\psi\Z$. 
Theorem \ref{thm1} tells us that  $\psi$, which describes the monodromy of $N$, is conjugate to $\phi$ and
therefore is hyperbolic. The one-holed torus is the only compact surface of Euler characteristic $-1$
that supports a hyperbolic automorphism, so $N$ is a once-holed torus bundle with the same mondromy as $M$,
and hence $N\cong M$. 

Although our results are concrete, the key facts that we exploit
are abstract properties of mapping class groups.  For Theorem \ref{thm1}
we use the fact that $\Mod^\pm(\punc)$ is \emph{omnipotent}
and enjoys the \emph{congruence subgroup property} (see Section 2).  Corresponding
results for mapping class groups of surfaces of higher
complexity are beyond the reach of current techniques. However, if one
assumes those properties of mapping class groups, then one can obtain
similar results for bundles with higher-genus fibre (see Theorem
\ref{generalthm}); our proof of Theorem \ref{thm1} is presented in a manner that emphasizes this
general strategy.

Our other main result completes one step in the strategy by establishing that fibring is
a profinite invariant for manifolds with first betti number 1: this is achieved by combining
Theorem \ref{closedprofinite} with the corresponding result in the case of manifolds with
boundary \cite{bridson_profinite_2015}.

\begin{letterthm}
\label{closedprofinite}
Let $M$ be a closed orientable hyperbolic 3-manifold with first betti number $b_1(M)=1$ that is a 
bundle with fibre
a closed surface $\Sigma$ of genus $g$. Let $N$ be a compact 3-manifold with
$\widehat{\pi_1(N)} \cong \widehat{\pi_1 (M)}$. Then $N$ is also
a closed orientable hyperbolic 3-manifold with $b_1(N)=1$ that is a 
bundle with fibre a closed surface
of genus $g$.
\end{letterthm}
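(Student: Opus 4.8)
The plan is to derive each conclusion about $N$ from a corresponding profinite-invariance statement, reserving the real work for the fibring. Since $\pi_1 M$ is infinite and residually finite, $\widehat{\pi_1 N}\cong\widehat{\pi_1 M}$ is infinite, so $\pi_1 N$ is infinite; and by the profinite rigidity results for $3$-manifold groups used in \cite{bridson_profinite_2015} (building on work of Wilton and Zalesskii), $N$ is a closed, orientable, aspherical $3$-manifold. Since the geometry of a closed aspherical $3$-manifold is determined by the profinite completion of its fundamental group, and $M$ is hyperbolic, $N$ is hyperbolic as well.

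Next, the torsion-free rank of the abelianisation of a finitely generated group is a profinite invariant, so $b_1(N)=b_1(M)=1$ and $H^1(N;\Z)\cong\Z$. Let $\phi_N$ generate $\mathrm{Hom}(\pi_1 N,\Z)$ and let $\phi_M$ be the fibred class of $M$. Since $\widehat\Z$ is torsion-free, every homomorphism $\pi_1 N\to\widehat\Z$ factors through the free part of $H_1(N;\Z)$, so $\mathrm{Hom}(\pi_1 N,\widehat\Z)$ is a copy of $\widehat\Z$ generated by the natural extension $\widehat\phi_N$ of $\phi_N$ (and likewise for $M$); as $\widehat\phi_M$ is surjective, the isomorphism $\widehat{\pi_1 M}\cong\widehat{\pi_1 N}$ must carry it to a unit multiple of $\widehat\phi_N$. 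In particular the kernels of $\widehat\phi_M$ and $\widehat\phi_N$ correspond under the isomorphism.

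The \emph{crux} is to show that $N$ fibres, i.e.\ that $\phi_N$ is a fibred class --- equivalently, by Stallings' theorem, that the kernel of $\phi_N$ is finitely generated. Here I would follow the strategy of Boileau--Friedl \cite{boileau_profinite_2015}. By the theorem of Friedl--Vidussi, a primitive class in $H^1$ of a closed $3$-manifold is fibred precisely when, for every epimorphism onto a finite group, the associated twisted Alexander polynomial is monic and of the degree prescribed by the Thurston norm. The finite quotients of $\pi_1 M$ and $\pi_1 N$ are matched by the profinite isomorphism and --- this is the technical heart, and the point at which the congruence subgroup property and the goodness of $3$-manifold groups are used --- so are the relevant twisted homology modules over the corresponding finite-sheeted covers; hence the monicity and degree conditions satisfied by $\phi_M$ transfer to $\phi_N$, which is therefore fibred. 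Thus $N$ is a surface bundle over the circle with fibre $F$ a closed orientable surface (closed because $N$ is closed, orientable because $N$ is orientable) of genus $g'\geq 2$. To see $g'=g$, recall that the inclusion of the fibre subgroup of a fibred $3$-manifold induces a short exact sequence of profinite completions, so the kernel of $\widehat\phi_M$ is isomorphic to $\widehat{\pi_1\Sigma}$ and the kernel of $\widehat\phi_N$ to $\widehat{\pi_1 F}$; since these kernels correspond we obtain $\widehat{\pi_1\Sigma}\cong\widehat{\pi_1 F}$, and comparing abelianisations gives $\widehat\Z^{2g}\cong\widehat\Z^{2g'}$, whence $g=g'$.

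The \emph{main obstacle} is the transfer of the twisted Alexander polynomials --- equivalently, of the homology of finite covers with suitable coefficients --- across the profinite isomorphism: this is essentially the mechanism developed by Boileau--Friedl \cite{boileau_profinite_2015}, and it is where the deeper separability properties of $3$-manifold groups are required. The genuinely new feature here, relative to \cite{boileau_profinite_2015}, is that $H_1(M;\Z)$ may contain torsion, since we assume only $b_1(M)=1$ rather than $H_1(M;\Z)\cong\Z$; so one must also check that the identification of covers and of twisted homology is insensitive to torsion in $H_1$, which it is, since those invariants are built from representations to finite groups rather than from $H_1$ directly.
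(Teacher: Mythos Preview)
Your outline takes a genuinely different route from the paper. After the same preliminary reductions ($N$ closed, orientable, aspherical, hyperbolic, $b_1(N)=1$), the paper \emph{explicitly avoids} twisted Alexander polynomials and argues geometrically. It picks an embedded non-separating incompressible surface $S\subset N$ dual to the generator of $H^1(N;\Z)$ and invokes the Bonahon--Thurston dichotomy: if $S$ is not a fibre, it is quasi-Fuchsian. A contradiction is then obtained from two lemmas. The first uses Agol's virtual specialness together with double-coset separability to find a finite-index subgroup $\Delta_0<\pi_1 N$ containing $H=\pi_1 S$ and an epimorphism $\Delta_0\to F_2$ with $H$ in the kernel. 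The second, a pro-$p$ duality argument in the spirit of an exercise in Serre's \emph{Galois Cohomology}, shows that $\widehat H$ has finite index in $\widehat G=\widehat{\pi_1\Sigma}$. Combining these, the induced map $\widehat{\Delta}_0\to\widehat F_2$ would have to factor through $\widehat{\Delta}_0/\widehat{G}_0\cong\widehat\Z$, which is impossible since $\widehat F_2$ is non-abelian. What this buys, relative to your approach, is that one never has to match twisted invariants across the profinite isomorphism; the price is the reliance on the deep separability package for hyperbolic $3$-manifolds.

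Your approach is in principle viable --- indeed the paper records that Boileau and Friedl have indicated their methods extend to this case --- but as written it has a real gap at exactly the point you flag as ``the technical heart'': you \emph{assert} that the twisted homology modules, and hence the monicity and degree conditions, transfer across the profinite isomorphism, but you do not supply the argument. This is not routine. In particular, the Friedl--Vidussi criterion compares the degree of the twisted Alexander polynomial with the \emph{Thurston norm} of the class, so you must either establish separately that the Thurston norm of $\phi_N$ equals that of $\phi_M$ (itself a nontrivial profinite-invariance statement) or reorganise the argument to avoid it. Your final remark that the extension from $H_1\cong\Z$ to $b_1=1$ is ``insensitive to torsion in $H_1$'' is plausible but again needs justification, since the original Boileau--Friedl argument in \cite{boileau_profinite_2015} was written under the torsion-free hypothesis. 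Finally, the congruence subgroup property plays no role here: in this paper CSP is a statement about $\Out(F_2)$ used for the punctured-torus results, not for detecting fibring; the inputs you actually need on the Boileau--Friedl side are cohomological goodness and separability.
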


In \cite{boileau_profinite_2015},  this theorem was proved under the assumption that $H_1(M,\Z)\cong\Z$, using different methods: we avoid their use of twisted Alexander polynomials, relying instead on topological arguments.

Throughout, we assume that the reader is familiar with elementary facts about profinite groups, as described in
\cite{ribes_profinite_2010} for example.

\subsection*{Acknowledgements}

We thank Pavel Zalesskii for drawing to our attention an argument that appears in the proof of Theorem \ref{closedprofinite}.
We also thank the Mathematical Sciences Research Institute for their hospitality,  and the anonymous referee for constructive comments which improved our exposition.
The first author was
supported in part by grants from the                        
EPSRC and a Royal Society Wolfson Merit Award. The second
author was supported in part by an NSF grant and The Wolfensohn Fund.
He would also like to thank the Institute for Advanced Study for its
hospitality whilst this work was completed. The third author was supported
by a grant from the EPSRC.

\section{Congruence omnipotence} \label{ss: Congruence omnipotence}

In this section we define the notion of {\em congruence omnipotence} in
$\Out(G)$, for $G$ a finitely generated group. Our main theorem
(Theorem \ref{generalthm}) asserts that, when it holds, congruence
omnipotence enables one to deduce profinite rigidity results for
mapping tori $G\rtimes\Z$.


\begin{definition} Let $G$ be a finitely generated group and let $H\subseteq \Out(G)$ be a subgroup.
A finite quotient  $H\to Q$ is a \emph{$G$-congruence quotient} if it factors through $\pi: H\to P
\subset\Out(G/K)$ where $K$ is a characteristic subgroup of finite index in $G$ and
$\pi$ is the restriction of the natural map $\Out(G)\to \Out(G/K)$.
 We say that \emph{$\Out(G)$ has the congruence subgroup property} if every finite quotient of $\Out(G)$ is a $G$-congruence quotient. More generally, we say that a subgroup $H\subseteq \Out(G)$ has the
{\em $G$-congruence subgroup property} if every finite quotient of $H$  is a $G$-congruence quotient.
\end{definition}

\begin{remark}  Care is needed in the above definition:  
  there may be distinct groups $G_1$ and $G_2$ with $\Out(G_1)\cong\Out(G_2)$ such that every finite quotient is congruence with respect to $G_1$ but not with respect to $G_2$. For instance, this phenomenon occurs with $\Out(F_2)\cong GL(2,\Z)=\Out(\Z^2)$, which has the congruence subgroup property with respect to $F_2$ but not $\Z^2$. 
  Thus ``$\Out(G)$ has the congruence subgroup property" is a statement about $G$ and not 
  the abstract group $\Out(G)$. 
\end{remark}


\emph{Omnipotence} was first defined by Wise in the context of free and hyperbolic groups.   

\begin{definition}\label{d:omnip}
Let $\G$ be a group. Elements $\g_1,\g_2\in \G$ of infinite order are said to be {\em{independent}} if no non-zero power of $\g_1$ is conjugate to a non-zero power of $\g_2$ in $\G$. An $m$-tuple $(\gamma_1,\ldots,\gamma_m)$ of elements  is  \emph{independent} if $\g_i$ and $\g_j$ are independent whenever $1\le i < j\le m$.  The  group $\Gamma$ is said to be \emph{omnipotent} if, for every independent $m$-tuple $(\gamma_1,\ldots,\gamma_m)$ of elements in $\G$,   there exists a positive integer $\kappa$ such that, for every $m$-tuple of positive integers $(e_1,\ldots,e_m)$ there is a homomorphism to a finite group
\[
q:\Gamma\to Q
\]
such that $o(q(\gamma_i))=\kappa e_i$ for $i=1,\dots,m$, where $o(g)$ denotes the order of a group element $g$.  For a subgroup $H$ of $\Gamma$, if we wish to emphasize that an $m$-tuple of elements is independent in $H$, we will say that the tuple is \emph{$H$-independent}.
\end{definition}

We focus on a more restrictive form of omnipotence that is adapted to our purposes.  
Two motivating examples that we have in mind are: (i) where $G$ is a closed surface group or a free group
and $H=\Out(G)$;
and (ii) where $G$ is a free
group and $H\subseteq\Out(G)$ is the mapping class group of a punctured surface.  In these 
contexts, there is usually a favoured class of elements for which one expects omnipotence to hold,
e.g. pseudo-Anosovs in the case of mapping class groups, or fully irreducible elements in the case of $\Out(F_n)$; we therefore work with subsets $\mathcal{S}\subseteq H$. We also insist that the finite quotients obtained should be $G$-congruence quotients.

\begin{definition}
Let $G$ be a finitely generated group, let $H$ be a subgroup of $\Out(G)$ and let $\mathcal{S}$ be a subset of $H$.  We say\footnote{if $H=\Out(G)$ we abbreviate this to ``$G$-congruence omnipotent''}
 that $\mathcal{S}$ is $(G,H)$-\emph{congruence omnipotent} if, for every $m$ and every $H$-independent $m$-tuple $(\phi_1,\ldots,\phi_m)$ of elements of $\mathcal{S}$, there is a constant $\kappa$ such that, for any $m$-tuple of positive integers $(n_1,\ldots,n_m)$, there is a $G$-\emph{congruence} quotient $q:H\to Q$ such that $o(q(\phi_i))=\kappa n_i$ for all $i$.
\end{definition}

\begin{remark}
If $\Out(G)$ is omnipotent and has the congruence subgroup property, then the set of infinite-order elements is $G$-congruence omnipotent in $\Out(G)$.
\end{remark}

Our most general theorem shows how congruence omnipotence can be used as a tool
for establishing profinite rigidity for the mapping tori associated to automorphisms of a fixed group $G$.

\begin{theorem}\label{generalthm}
Let $G$ be a finitely generated group, let $H\subseteq\Out(G)$ be a subgroup
 and let $\mathcal{S}$ be a $(G,H)$-congruence omnipotent subset.  Let $\phi_1,\phi_2\in\mathcal{S}$, let $\Gamma_i=G\rtimes_{\phi_i}\Z$ and suppose that $b_1(\Gamma_{i})=1$ for $i=1,2$.  If $\wh{\Gamma}_{1}=\wh{\Gamma}_2$ then there is an integer $n$ such that $\phi_1^n$ is conjugate in $H$ to $\phi_2^{\pm n}$.
\end{theorem}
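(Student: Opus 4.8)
The plan is to exploit the hypothesis $b_1(\Gamma_i)=1$ to recognise the splitting $\Gamma_i = G\rtimes_{\phi_i}\Z$ inside the profinite completion $\wh\Gamma_i$, and then to extract from an isomorphism $\wh\Gamma_1\cong\wh\Gamma_2$ enough commensurability data about $\phi_1$ and $\phi_2$ to force the conclusion using congruence omnipotence. First I would observe that since $b_1(\Gamma_i)=1$, the abelianisation map $\Gamma_i\to\Z$ (with kernel $G$, as $G$ is finitely generated so the commutator quotient of $\Gamma_i$ is $\Z$ up to torsion) is canonical up to sign, hence induces a canonical-up-to-sign continuous epimorphism $\wh\Gamma_i\to\wh\Z$. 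An isomorphism $f\co\wh\Gamma_1\to\wh\Gamma_2$ must then carry the kernel of the first to the kernel of the second, i.e.\ $f(\wh G_1)=\wh G_2$ where I write $\wh G_i$ for the closure of $G$ in $\wh\Gamma_i$; this closure is naturally the profinite completion $\wh G$ of $G$ since $G$ is finitely generated and the extension is split. So $f$ restricts to an isomorphism $\wh G\to\wh G$ that conjugates the outer action of $\phi_1$ (the action of a generator of $\Z$) to that of $\phi_2$, after possibly composing with an inversion to match the sign on $\wh\Z$. Concretely: there is $\alpha\in\Aut(\wh G)$ and $\epsilon\in\{\pm1\}$ with $\alpha\,\wh{\phi_1}\,\alpha^{-1} = \wh{\phi_2}^{\,\epsilon}$ in $\Out(\wh G)$, where $\wh{\phi_i}$ denotes the image of $\phi_i$ under $\Out(G)\to\Out(\wh G)$.

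The next step is to convert this profinite conjugacy into information detectable by finite $G$-congruence quotients of $H$. Replacing $\phi_2$ by $\phi_2^{\epsilon}$ I may assume $\epsilon=1$. The element $\alpha\in\Aut(\wh G)$ need not come from $\Aut(G)$, but it normalises (up to inner automorphisms) the cyclic subgroup generated by $\wh{\phi_1}$, and the relation $\alpha\wh{\phi_1}\alpha^{-1}=\wh{\phi_2}$ holds in $\Out(\wh G)$. Passing to any characteristic finite-index $K\le G$, we get $\alpha_K\in\Out(G/K)$ with $\alpha_K\,\bar\phi_1\,\alpha_K^{-1}=\bar\phi_2$, where $\bar\phi_i$ is the image of $\phi_i$ in $\Out(G/K)$. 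In particular $\bar\phi_1$ and $\bar\phi_2$ are conjugate in $\Out(G/K)$, hence have the same order in $\Out(G/K)$ for every characteristic finite-index $K$. Equivalently, for every $G$-congruence quotient $q\co H\to Q$ (which by definition factors through some such $\Out(G/K)$), the elements $q(\phi_1)$ and $q(\phi_2)$ are conjugate in $Q$, so $o(q(\phi_1))=o(q(\phi_2))$.

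Now I would run the dichotomy on $\phi_1,\phi_2\in\mathcal S$. If $\phi_1$ and $\phi_2$ are $H$-independent, apply congruence omnipotence to the pair: there is $\kappa$ such that for every $(n_1,n_2)$ there is a $G$-congruence quotient $q\co H\to Q$ with $o(q(\phi_1))=\kappa n_1$ and $o(q(\phi_2))=\kappa n_2$; taking, say, $n_1=1,n_2=2$ contradicts $o(q(\phi_1))=o(q(\phi_2))$. (If some $\phi_i$ has finite order one argues directly: a singleton tuple $(\phi_j)$ with $\phi_j$ of infinite order is independent, and congruence omnipotence lets one realise arbitrarily large orders $\kappa n$ for $q(\phi_j)$ while $q(\phi_i)$ is bounded by the fixed finite order of $\phi_i$, again a contradiction; if both have finite order, then $\phi_1^n$ and $\phi_2^n$ are trivial for suitable $n$ and the statement is vacuous.) Hence $\phi_1$ and $\phi_2$ are \emph{not} $H$-independent: some non-zero power $\phi_1^a$ is conjugate in $H$ to some non-zero power $\phi_2^b$. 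A final bookkeeping step pins down $a=\pm b$: since $\phi_1^a$ conjugate to $\phi_2^b$ implies $o(q(\phi_1)^a)=o(q(\phi_2)^b)$ for all $G$-congruence $q$, while we also have $o(q(\phi_1))=o(q(\phi_2))$ from the previous paragraph, choosing $q$ realising a large prime order for $q(\phi_1)$ (legitimate by applying congruence omnipotence to the singleton $(\phi_1)$, assuming now $\phi_1$ has infinite order) forces $a\equiv \pm b$ modulo that prime, and letting the prime grow gives $a=\pm b$. Setting $n=a$ yields $\phi_1^n$ conjugate in $H$ to $\phi_2^{\pm n}$, as claimed.

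The main obstacle is the first paragraph: ensuring that an abstract isomorphism $\wh\Gamma_1\cong\wh\Gamma_2$ genuinely respects the fibre subgroup and monodromy. The point $b_1(\Gamma_i)=1$ is what makes the projection to $\wh\Z$ canonical, but one must be careful that the closure of $G$ in $\wh\Gamma_i$ is really $\wh G$ (this uses that $G$ is finitely generated and the sequence $1\to G\to\Gamma_i\to\Z\to1$ splits, so it stays exact on profinite completion and $G$ is closed in $\wh\Gamma_i$ — more precisely $\Gamma_i$ is $G$-separable here) and that the induced outer action of the $\wh\Z$-direction on $\wh G$ is precisely $\wh{\phi_i}$. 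Granting this, everything else is the orderly translation of profinite conjugacy into finite congruence quotients and the pigeonhole use of omnipotence described above.
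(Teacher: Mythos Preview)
Your overall strategy matches the paper's: establish that $\phi_1$ and $\phi_2$ have equal order in every $G$-congruence quotient (this is the paper's Lemma~\ref{lemma2}), rule out $H$-independence by applying omnipotence to the pair, then use omnipotence on a singleton together with the equal-orders fact to pin down the exponents. Two points need correction.

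First, the relation $\alpha\,\wh{\phi_1}\,\alpha^{-1}=\wh{\phi_2}^{\,\epsilon}$ with $\epsilon\in\{\pm1\}$ is too strong. An abstract isomorphism $f\colon\wh\Gamma_1\to\wh\Gamma_2$ of profinite groups induces a continuous automorphism of $\wh\Z$, and these are given by multiplication by arbitrary units $u\in\wh\Z^{\,*}$, not just $\pm1$: the map $\wh\Gamma_i\to\wh\Z$ is canonical, but there is no reason for $f$ to preserve the dense (non-closed) copy of $\Z$ sitting inside $\wh\Z$. The correct statement is that in each $\Out(G/K)$ the image of $\phi_1$ is (conjugate by $\alpha_K$ to) a \emph{generating power} of the cyclic group $\langle\bar\phi_2\rangle$; equivalently, after fixing the identification $\wh\Gamma_1=\wh\Gamma_2$ as the paper does, the images of $\phi_1$ and $\phi_2$ generate the \emph{same} cyclic subgroup of $\Out(G/K)$. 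Your desired conclusion $o(q(\phi_1))=o(q(\phi_2))$ survives unchanged, so this is a repairable slip, but the stated reason is wrong.

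Second, and more seriously, the ``large prime'' bookkeeping does not work. Omnipotence for the singleton $(\phi_1)$ only produces orders of the form $\kappa n$ with a \emph{fixed} $\kappa$, so prime order is not available in general; and even granting $o(q(\phi_1))=o(q(\phi_2))=p$ with $p>|a|,|b|$, the equality $o(q(\phi_1)^a)=o(q(\phi_2)^b)$ reduces to $p/\gcd(p,a)=p/\gcd(p,b)$, i.e.\ $1=1$, which yields no congruence between $a$ and $b$. The paper's argument here is the clean one: choose $q$ so that $n_1 n_2$ divides $o(q(\phi_1))=o(q(\phi_2))$; then
\[
\frac{o(q(\phi_1))}{n_1}=o(q(\phi_1^{n_1}))=o(q(\phi_2^{\pm n_2}))=\frac{o(q(\phi_2))}{n_2}
\]
forces $n_1=n_2$ directly.
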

 
The key observation in the proof of Theorem \ref{generalthm}  is contained in the following lemma.

\begin{lemma}\label{lemma2}
Let $\Gamma_i=G\rtimes_{\phi_i}\Z$ for $i=1,2$.  If $\widehat{\Gamma}_1\cong\widehat{\Gamma}_2$ and $b_1(\Gamma_i)=1$ for $i=1,2$, then the image of
$\phi_1$ and $\phi_2$  generate the same cyclic subgroup
in the outer automorphism group of any characteristic quotient of $G$.
\end{lemma}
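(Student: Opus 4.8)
The plan is to exploit the isomorphism $\wh{\Gamma}_1\cong\wh{\Gamma}_2$ at the level of the canonical short exact sequence attached to each mapping torus, and then push the comparison down to the finite level of a characteristic quotient of $G$. First I would record the structural input: for each $i$ we have $1\to G\to\Gamma_i\to\Z\to 1$, and since $b_1(\Gamma_i)=1$ this is, up to sign, the \emph{unique} epimorphism $\Gamma_i\to\Z$; dualising to profinite completions gives $1\to\wh{G}\to\wh{\Gamma}_i\to\wh{\Z}\to 1$, where $\wh{G}$ is the closure of the image of $G$ (here one uses that $G$ is finitely generated, so the sequence of profinite completions is exact, and that $\wh{\Gamma}_i\to\wh{\Z}$ is characteristic because $b_1=1$ forces it to be the abelianisation-to-torsion-free-part map up to automorphisms of $\wh{\Z}$). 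Consequently any isomorphism $\Phi\colon\wh{\Gamma}_1\to\wh{\Gamma}_2$ carries $\overline{G}$ (the closure of $G$ in $\wh{\Gamma}_1$) isomorphically onto the corresponding subgroup of $\wh{\Gamma}_2$, and induces an isomorphism of the quotients $\wh{\Z}\to\wh{\Z}$, which is multiplication by some unit $u\in\wh{\Z}^\times$.

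Next I would fix a characteristic subgroup $K\le G$ of finite index and set $\overline{G}=G/K$, so there is a natural map $\Out(G)\to\Out(\overline{G})$ and we want to show $\phi_1,\phi_2$ have the same image cyclic subgroup there. The monodromy $\phi_i$ acts on $\overline{G}$; choosing a generator $t_i$ of the $\Z$ factor, conjugation by $t_i$ on $G$ descends to an automorphism of $\overline{G}$ representing $\phi_i\in\Out(\overline{G})$. Because $K$ is characteristic in $G$ and $\overline{G}$ is finite, the subgroup $\overline{G}\rtimes_{\phi_i}\Z$ sits inside a \emph{finite} quotient of $\Gamma_i$ — more precisely, $\Gamma_i$ surjects onto $(G/K)\rtimes (\Z/N)$ for suitable $N$ killing the order of $\phi_i$ in $\Out(G/K)$ — and, crucially, the characteristic nature of everything means these finite quotients are detected the same way on both sides by the profinite isomorphism. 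So I would argue that $\Phi$ intertwines the actions: the image of $t_1$ under $\Phi$ equals $t_2^{u}\cdot(\text{element of }\overline{G})$ modulo the relevant congruence kernel, hence conjugation by $\overline{\Phi(t_1)}$ and by $\overline{t_2}^{\,u}$ induce the same outer automorphism of $\overline{G}$. Since $\Phi$ restricted to $\overline{G}$ is some isomorphism of the finite group $G/K$, conjugating by it does not change the \emph{cyclic subgroup} generated in $\Out(\overline{G})$; therefore $\langle\phi_1\rangle$ and $\langle\phi_2^{u}\rangle=\langle\phi_2\rangle$ coincide inside $\Out(\overline{G})$ (the last equality because $u$ is a unit in $\wh\Z$, so its reduction mod the order of $\phi_2$ is a unit, so $\phi_2^u$ generates the same finite cyclic group as $\phi_2$).

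The main obstacle I expect is making the sentence ``$\Phi$ sends $t_1$ to $t_2^{u}$ times something in $\overline{G}$, compatibly with the descent to $\overline{G}$'' fully rigorous: one must check that the finite quotient $(G/K)\rtimes(\Z/N)$ of $\Gamma_1$ really is carried by $\Phi$ to the analogous quotient of $\Gamma_2$ (not merely to an abstractly isomorphic finite group), which requires that the kernel — the preimage of $K$ under $\Gamma_1\to G$, intersected with a congruence-type condition — be recognised from $\wh{\Gamma}_1$ alone. This is where the hypothesis $b_1(\Gamma_i)=1$ does real work: it pins down the projection to $\wh\Z$ canonically, so the subgroup $\overline{G}<\wh\Gamma_i$ and hence the family of subgroups ``preimage of a characteristic finite-index subgroup of $G$'' is $\Phi$-invariant. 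I would handle this by working with the directed system of such subgroups, noting $\wh{G}=\varprojlim G/K$ over characteristic $K$, and observing that $\Phi|_{\overline G}$ respects this inverse system since characteristic subgroups are preserved by any automorphism; the induced maps on the finite pieces $G/K$ are then the isomorphisms that conjugate $\phi_1$ to a generator of $\langle\phi_2\rangle$.
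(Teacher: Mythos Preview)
Your overall strategy matches the paper's: pass to the exact sequence $1\to\wh G\to\wh\Gamma_i\to\wh\Z\to 1$ (canonical because $b_1=1$), observe that conjugation gives a well-defined outer action $\wh\Z\to\Out(\wh G/\wh K)$ whose image is a single cyclic group $C$, and then note that each $t_i$, projecting to a topological generator of $\wh\Z$, hits a generator of $C$. The paper executes this in a few lines without ever naming $\Phi$, the unit $u$, or the auxiliary finite quotients $(G/K)\rtimes(\Z/N)$; those extras in your write-up are not needed and are where the trouble enters.

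There are two genuine gaps. First, the sentence ``$\Phi|_{\overline G}$ respects this inverse system since characteristic subgroups are preserved by any automorphism'' conflates automorphisms of $G$ with continuous automorphisms of $\wh G$: if $K$ is characteristic in $G$ then $\wh K$ is normal in $\wh G$ (indeed in $\wh\Gamma$), but $\wh K$ need not be invariant under an arbitrary continuous automorphism of $\wh G$, so you have not shown that $\alpha=\Phi|_{\wh G}$ carries $\wh K$ to itself. Second, and more damagingly, even granting that $\alpha$ descends to some $\bar\alpha\in\Aut(G/K)$, the assertion ``conjugating by it does not change the cyclic subgroup generated in $\Out(\overline G)$'' is false in general: conjugation by the image of $\bar\alpha$ in $\Out(G/K)$ sends $\langle\bar\phi_1\rangle$ to a \emph{conjugate} cyclic subgroup, not necessarily to itself. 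What your computation actually yields is $\langle\bar\phi_1\rangle=[\bar\alpha]^{-1}\langle\bar\phi_2\rangle[\bar\alpha]$, and hence equal orders --- which, incidentally, is all that the applications in Theorem~\ref{generalthm} ever use.
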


\begin{proof} 
We fix an identification $\widehat{\Gamma}_1=\widehat{\Gamma}_2$. 
The unique epimorphism $\Gamma_i\to\Z$ defines a short exact sequence
$$
1\to \wh{G}\to \wh{\G}_i\to \wh{\Z}\to 1.
$$
If $K<G$ is a characteristic subgroup of finite index, then the canonical map $G\to G/K$ defines 
an epimorphism $\wh{G}\to G/K$. Since $\wh{K}$ is normal in $\wh{\Gamma}_i$, the action of $\wh{\Z}$ 
on $\wh{G}$ induced by conjugation in $\wh{\G}_i$ descends to an
action on $\wh{G}/\wh{K}=G/K$, defining a cyclic subgroup  $C<\Out(G/K)$, of order $m$ say. The righthand factor
of $\Gamma_i=G\rtimes_{\phi_i}\Z$ is dense in $\wh{\Z}$, so the image of $\phi_i$ generates $C$ for $i=1,2$.
\end{proof}

\begin{proof}[Proof of Theorem \ref{generalthm}]
Suppose first 
that $\phi_1$ and $\phi_2$ are $H$-independent.  Since $\mathcal{S}$ is $(G,H)$-congruence omnipotent, there exists a $G$-congruence quotient $q:H\to Q$ such that $o(q(\phi_1))\neq o(q(\phi_2))$.  By definition, $q$ factors as
\[
H\to P \to Q
\]
with $H\to P$ the restriction of the natural map  $\Out(G)\to\Out(G/K)$ for some characteristic subgroup $K$ of finite index in $G$.
Since the images of $\phi_1$ and $\phi_2$ have distinct orders in $Q$, 
the images of $\phi_1$ and $\phi_2$
cannot generate the same cyclic subgroup of $\Out(G/K)$, contradicting 
Lemma \ref{lemma2}.

Therefore, $\phi_1$ and $\phi_2$ are not $H$-independent, so there are positive integers $n_1$ and $n_2$ such that $\phi_1^{n_1}$ is conjugate to $\phi_2^{\pm n_2}$ in $H$. It remains to prove that $n_1=n_2$.  By congruence omnipotence applied to the $1$-tuple $(\phi_1)$, there is a characteristic subgroup $K$ of finite index in $G$ such that $n_1n_2$ divides $o(q(\phi_1))$, where $q:\Out(G)\to\Out(G/K)$ is the natural homomorphism. In particular, we have
\[
o(q(\phi_1))/n_1=o(q(\phi_1^{n_1}))=o(q(\phi_2^{n_2}))=o(q(\phi_2))/n_2
\] 
and so, since Lemma \ref{lemma2} implies that $o(q(\phi_1))=o(q(\phi_2))$, we have $n_1=n_2$ as claimed.
\end{proof}

\subsection{$\Out(F_2)$ is congruence omnipotent} 

The congruence subgroup property for $\Out(F_2)$ was established by Asada \cite{asada_faithfulness_2001};
alternative proofs were given by Bux--Ershov--Rapinchuk \cite{bux_congruence_2011} and  Ellenberg--McReynolds \cite{ellenberg_arithmetic_2012}.

\begin{theorem}[Asada \cite{asada_faithfulness_2001}]\label{CSP}
For any finite quotient $\Out(F_2)\to Q$ there is a characteristic finite-index subgroup $K$ of $F_2$ such that the quotient map factors as
\[
\Out(F_2)\to\Out(F_2/K)\to Q~.
\]
\end{theorem}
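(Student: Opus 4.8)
The statement to be proved is the congruence subgroup property for $\Out(F_2)$, namely Theorem~\ref{CSP}: every finite quotient of $\Out(F_2)$ factors through $\Out(F_2/K)$ for some characteristic finite-index $K\le F_2$. Since this is Asada's theorem \cite{asada_faithfulness_2001}, the point of a self-contained proof in this paper would be to reduce it to the classical congruence subgroup property for $\SL(2,\Z)$ via the exceptional isomorphism $\Out(F_2)\cong\GL(2,\Z)$, exploiting the very special structure of rank $2$. The plan is to translate ``congruence quotient'' in the sense of the first definition of Section~\ref{ss: Congruence omnipotence} into the arithmetic notion of principal congruence subgroup, and then invoke that every finite-index subgroup of $\SL(2,\Z)$ of the relevant kind contains a principal congruence subgroup $\Gamma(N)$.

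First I would fix the identification $\Out(F_2)\cong\GL(2,\Z)$ coming from the action on $H_1(F_2;\Z)\cong\Z^2$ (this is an isomorphism, not merely a surjection, a phenomenon special to rank $2$: $\Aut(F_2)\to\GL(2,\Z)$ has kernel the inner automorphisms). Given a finite quotient $\rho:\GL(2,\Z)\to Q$, its restriction to $\SL(2,\Z)$ has finite-index kernel, so by the congruence subgroup property for $\SL(2,\Z)$ there is an integer $N$ with $\Gamma(N)\subseteq\ker\rho$; enlarging $N$ I may assume $-I\in\ker\rho$ as well and handle the $\GL$ versus $\SL$ discrepancy by passing to $\pm\Gamma(N)$, which is normal of finite index in $\GL(2,\Z)$. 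Thus $\rho$ factors through $\GL(2,\Z)/\!\pm\!\Gamma(N)\cong\GL(2,\Z/N\Z)/\{\pm I\}$, a quotient defined by reduction mod $N$. The remaining task is to realise this reduction-mod-$N$ quotient as a $G$-congruence quotient for $G=F_2$: I need a characteristic finite-index subgroup $K\le F_2$ such that the natural map $\Out(F_2)\to\Out(F_2/K)$ has image mapping onto $\GL(2,\Z/N\Z)/\{\pm I\}$ (equivalently, so that $\rho$ factors through $\Out(F_2/K)$). The natural candidate is $K=\gamma_2(F_2)F_2^N$, whose quotient $F_2/K\cong(\Z/N\Z)^2$ has $\Out$ equal to $\GL(2,\Z/N\Z)$, with the map $\Out(F_2)\to\GL(2,\Z/N\Z)$ being exactly reduction mod $N$. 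Since $\rho$ is trivial on $\pm\Gamma(N)=$ the kernel of $\Out(F_2)\to\Out(F_2/K)/\{\pm I\}$, one checks it descends; a small bookkeeping step addresses the $\{\pm I\}$ factor, e.g.\ by instead taking $K$ to be the kernel of $F_2\to H$ for a suitable finite characteristic quotient $H$ whose automorphisms see the class of $-I$, or simply absorbing $-I$ into $\ker\rho$ as above so that $\rho$ genuinely factors through $\GL(2,\Z/N\Z)=\Out(F_2/K)$.

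I expect the main obstacle to be precisely this last matching step: verifying that the arithmetically-defined quotient $\GL(2,\Z/N\Z)$ (or its quotient by $\{\pm I\}$) coincides, as a quotient of $\Out(F_2)$, with the topologically-defined quotient through $\Out(F_2/K)$ for the characteristic subgroup $K=\gamma_2(F_2)F_2^N$ --- i.e.\ confronting the subtlety flagged in the Remark after the congruence-quotient definition, that the congruence condition is intrinsically about $F_2$ and not the abstract group $\GL(2,\Z)$. Concretely, one must check that every element of $\GL(2,\Z/N\Z)$ is induced by an (outer) automorphism of $F_2/(\gamma_2 F_2^N)$ and that the induced map $\Out(F_2)\to\Out(F_2/(\gamma_2F_2^N))$ is surjective with kernel $\pm\Gamma(N)$; the surjectivity is where one uses that $F_2/\gamma_2(F_2)F_2^N$ is a free $\Z/N\Z$-module of rank $2$, so its automorphism group is all of $\GL(2,\Z/N\Z)$, and the inner automorphisms account for no outer part (the quotient is abelian). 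The $\SL(2,\Z)$-to-$\GL(2,\Z)$ passage and the $\{\pm I\}$ nuisance are routine once one works with $\pm\Gamma(N)$ throughout; the genuinely load-bearing external input is the congruence subgroup property of $\SL(2,\Z)$, which is classical (e.g.\ every finite-index subgroup containing $\Gamma(N)$ for some $N$ — a theorem going back to work on modular curves, or deducible directly from the presentation of $\SL(2,\Z)$).
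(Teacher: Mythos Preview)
Your proposal contains a fatal error: $\SL$ does \emph{not} have the congruence subgroup property. There exist finite-index subgroups of $\SL$ that contain no principal congruence subgroup $\Gamma(N)$; this is classical and is precisely why the congruence subgroup problem is interesting (the property holds for ${\rm SL}(n,\Z)$ with $n\ge 3$ by Bass--Lazard--Serre and Mennicke, but famously fails for $n=2$). So the step ``by the congruence subgroup property for $\SL$ there is an integer $N$ with $\Gamma(N)\subseteq\ker\rho$'' is simply false, and the whole argument collapses.

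In fact your choice $K=\gamma_2(F_2)F_2^N$ gives $F_2/K\cong(\Z/N\Z)^2$ and the induced map $\Out(F_2)\to\Out(F_2/K)$ is nothing other than reduction mod $N$, i.e.\ the map $\GL\to{\rm GL}(2,\Z/N\Z)$. Thus the quotients you can reach this way are exactly the $\Z^2$-congruence quotients of $\GL=\Out(\Z^2)$. The Remark immediately following the definition of $G$-congruence quotient in Section~\ref{ss: Congruence omnipotence} says explicitly that $\Out(F_2)\cong\GL$ has the congruence subgroup property with respect to $F_2$ but \emph{not} with respect to $\Z^2$; your argument, if it worked, would prove the latter, which is false. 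The content of Asada's theorem is precisely that richer characteristic quotients of $F_2$ (necessarily non-abelian ones) supply enough extra finite quotients of $\Out(F_2)$ to capture the non-congruence subgroups of $\GL$.

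Note also that the paper does not give its own proof of Theorem~\ref{CSP}: it is stated as Asada's theorem and attributed to \cite{asada_faithfulness_2001}, with alternative proofs cited to Bux--Ershov--Rapinchuk \cite{bux_congruence_2011} and Ellenberg--McReynolds \cite{ellenberg_arithmetic_2012}. None of those proofs is elementary in the way you propose; they use, respectively, anabelian/Galois-theoretic input, a careful group-theoretic analysis of $\Aut(\widehat{F}_2)$, and arithmetic of modular curves.
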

 
Wise proved that finitely generated free groups are omnipotent and later extended his proof to virtually special groups \cite{wise_subgroup_2000}.
Bridson and Wilton gave a more direct proof that virtually free groups are omnipotent \cite{bridson_triviality_2015}.
As $\Out(F_2)$ is virtually free, in the light of Theorem \ref{CSP} we have:

\begin{proposition}\label{CSPprop}
The set of elements of infinite order in $\Out(F_2)$ is $F_2$-congruence omnipotent.
\end{proposition}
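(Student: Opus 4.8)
The plan is to reduce Proposition \ref{CSPprop} to a combination of Theorem \ref{CSP} and the omnipotence of $\Out(F_2)$, via the remark immediately preceding Theorem \ref{generalthm}: if $\Out(G)$ is omnipotent and has the $G$-congruence subgroup property, then the set of infinite-order elements is $G$-congruence omnipotent. So the proof has two ingredients to assemble. First, $\Out(F_2)\cong\GL(2,\Z)$ is virtually free (it contains a finite-index free subgroup, e.g. via the well-known $\PSL(2,\Z)\cong\Z/2*\Z/3$ decomposition), so by Wise's theorem, or the more direct argument of Bridson--Wilton \cite{bridson_triviality_2015}, it is omnipotent. Second, Theorem \ref{CSP} is precisely the statement that $\Out(F_2)$ has the $F_2$-congruence subgroup property in the sense defined above: every finite quotient factors through $\Out(F_2/K)$ for some characteristic finite-index $K\le F_2$.

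The one genuinely substantive point is to verify the remark, i.e. that these two properties together upgrade to $F_2$-congruence omnipotence. Here is how I would argue it. Fix an $\Out(F_2)$-independent $m$-tuple $(\phi_1,\dots,\phi_m)$ of infinite-order elements, and let $\kappa$ be the omnipotence constant for this tuple. Given positive integers $(n_1,\dots,n_m)$, omnipotence furnishes a finite quotient $q_0\co\Out(F_2)\to Q_0$ with $o(q_0(\phi_i))=\kappa n_i$ for all $i$. By Theorem \ref{CSP}, $q_0$ factors as $\Out(F_2)\to\Out(F_2/K)\to Q_0$ for some characteristic finite-index $K\le F_2$. Let $P$ be the image of $\Out(F_2)$ in $\Out(F_2/K)$ and let $q\co\Out(F_2)\to P$ be the (corestricted) natural map; this is by construction an $F_2$-congruence quotient. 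The point is that $q$ itself already has the required order data: since $q$ surjects onto $Q_0$ with $q_0=(P\to Q_0)\circ q$, the order $o(q(\phi_i))$ is a multiple of $o(q_0(\phi_i))=\kappa n_i$. To get it on the nose rather than merely as a multiple, apply omnipotence instead to the expanded tuple together with a single extra independent element, or more simply: replace the target integers $n_i$ by $n_i' \mid n_i$ chosen so that the congruence quotient realizes exactly $\kappa n_i$ — but the cleanest route is to observe that one may run the omnipotence step directly inside a congruence quotient. Concretely, enumerate the characteristic finite-index subgroups $K_1\ge K_2\ge\cdots$ of $F_2$; the maps $\Out(F_2)\to\Out(F_2/K_j)$ are cofinal among congruence quotients, and by Theorem \ref{CSP} they are cofinal among \emph{all} finite quotients of $\Out(F_2)$. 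Hence the finite quotient $q_0$ produced by omnipotence, being dominated by some $\Out(F_2)\to\Out(F_2/K_j)=:P_j$, means that the orders $\kappa n_i = o(q_0(\phi_i))$ are realized — up to the bookkeeping above — by congruence quotients, which is exactly $F_2$-congruence omnipotence.

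The main obstacle, and the place where a little care is genuinely needed, is this last bookkeeping: omnipotence as stated gives a quotient in which $o(q(\phi_i))=\kappa n_i$ \emph{exactly}, and one must ensure that passing to the (possibly larger) congruence quotient $P_j$ through which $q$ factors does not inflate these orders. The standard fix, which I would spell out, is to note that in a virtually free (indeed, in any omnipotent) group one has freedom in the choice of target exponents, so one first arranges that $q\co\Out(F_2)\to P_j$ has $o(q(\phi_i))$ equal to a \emph{prescribed} value; then since $P_j$ itself is a congruence quotient, we are done. Equivalently, one invokes the precise form of omnipotence relative to the cofinal system $\{P_j\}$: because every finite quotient is dominated by some $P_j$ (Theorem \ref{CSP}) and because omnipotence lets us realize any order profile $(\kappa n_1,\dots,\kappa n_m)$ in \emph{some} finite quotient, a diagram chase shows that order profile is already realized (after possibly re-scaling $\kappa$ once, uniformly over the tuple) in a congruence quotient $P_j$. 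This is exactly the content of the remark following the definition of $(G,H)$-congruence omnipotence, and with Theorems \ref{CSP} and Wise/Bridson--Wilton in hand the proposition follows.
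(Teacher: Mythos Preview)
Your approach is the same as the paper's: omnipotence of $\Out(F_2)$ (via virtual freeness and Wise/Bridson--Wilton) plus Theorem~\ref{CSP}, assembled via the remark preceding Theorem~\ref{generalthm}. That is all the paper does.

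However, your lengthy ``bookkeeping'' discussion is built on a misreading of the definition of \emph{$G$-congruence quotient}. Re-read it: a finite quotient $H\to Q$ is a $G$-congruence quotient if it \emph{factors through} some $H\to P\subset\Out(G/K)$; it is not required that $Q$ equal $P$. Thus, under Theorem~\ref{CSP}, \emph{every} finite quotient of $\Out(F_2)$ is an $F_2$-congruence quotient. In particular, the map $q_0:\Out(F_2)\to Q_0$ that omnipotence hands you, with $o(q_0(\phi_i))=\kappa n_i$ exactly, is already an $F_2$-congruence quotient; there is no need to replace it by the map to $P$, and no order-inflation issue arises. The remark you are trying to verify is therefore immediate, and all of your second and third paragraphs can be deleted.
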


\section{Profinite rigidity for punctured-torus bundles}\label{s:F2Z} 

Our proof of Theorem \ref{thm1} relies on a number of elementary calculations in ${\rm{GL}}(2,\Z)\cong
\Out(F_2)$; we have relegated these to an appendix, so as not to disturb the flow of our main argument.
The reader may wish to read that appendix before proceeding with this section.   

\subsection{Reducing to the hyperbolic case}

\begin{theorem}\label{t:pf2conj}
Let $\phi_1,\phi_2\in\Out(F_2)$, let $\Gamma_i=F_2\rtimes_{\phi_i}\Z$ and suppose that $\wh\G_{1}\cong\wh\G_{2}$.
\begin{enumerate}
\item If $\phi_1$ is hyperbolic then $\phi_2$ is hyperbolic.
\item If $\phi_1$ is not hyperbolic, then $\phi_2$ is conjugate\footnote{$\phi$
is always conjugate to $\phi^{-1}$ if $\phi$ is not hyperbolic} to $\phi_1$.
\end{enumerate}
\end{theorem}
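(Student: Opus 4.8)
The plan is to separate the behaviour of $\phi_i$ in $\GL(2,\Z)$ according to the Jordan type of the matrix, and to detect each case from the profinite completion $\wh\G_i$. Recall that an element of $\GL(2,\Z)$ is either (a) of finite order, (b) \emph{reducible} of infinite order, i.e.\ conjugate to a matrix $\pm\left(\begin{smallmatrix}1&k\\0&1\end{smallmatrix}\right)$ with $k\neq 0$ (equivalently a parabolic, having a repeated eigenvalue $\pm1$ and an invariant primitive line), or (c) \emph{hyperbolic}, i.e.\ with $|\tr|>2$. (The appendix alluded to in the excerpt presumably records exactly these normal forms and the associated structure of $\Gamma=F_2\rtimes_\phi\Z$.) The key point is that in each case the group $\Gamma=F_2\rtimes_\phi\Z$ — and hence, one hopes, $\wh\Gamma$ — has a recognisably different structure: if $\phi$ has finite order $d$ then $\Gamma$ is virtually $F_2\times\Z$ (it contains $F_2\times d\Z$ as a finite-index subgroup, so it is the fundamental group of a Seifert-fibred / product piece); if $\phi$ is reducible of infinite order then $\Gamma$ is not hyperbolic but contains a $\Z^2$ — indeed the axis of the parabolic gives a distinguished conjugacy class whose centraliser is a copy of $\Z^2$, so $M_\phi$ is a graph manifold with a nontrivial JSJ torus; and if $\phi$ is hyperbolic then $\Gamma$ is word-hyperbolic (the mapping torus of a fully irreducible free-group automorphism), contains no $\Z^2$, and $M_\phi$ is hyperbolic with cusps.

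For part (1), the strategy is to show that ``$\phi$ hyperbolic'' is equivalent to an intrinsic property of $\Gamma$ that is visible in $\wh\Gamma$. Concretely: $\phi$ is \emph{not} hyperbolic if and only if $\Gamma=F_2\rtimes_\phi\Z$ contains a subgroup isomorphic to $\Z^2$. Given this dichotomy, I would argue that the presence of a $\Z^2$ in $\Gamma$ is detected by $\wh\Gamma$. One direction is easy: if $\Z^2\le\Gamma$ then $\wh{\Z^2}=\wh\Z^2\le\wh\Gamma$ need not immediately help, so instead I would use the contrapositive together with known profinite rigidity of the relevant manifold classes. The cleanest route is: if $\phi_1$ is hyperbolic, $M_{\phi_1}$ is a cusped hyperbolic $3$-manifold, hence by \cite[Theorem B]{bridson_profinite_2015} any $N$ with $\wh{\pi_1N}\cong\wh\Gamma_1$ has $\pi_1N\cong F_r\rtimes_\psi\Z$ fibring over the circle with fibre of Euler characteristic $-1$, and moreover (being a cusped finite-volume hyperbolic $3$-manifold group, a property preserved in that theorem) $\pi_1 N$ is hyperbolic, so contains no $\Z^2$; taking $N=M_{\phi_2}$ forces $r=2$ and $\phi_2$ hyperbolic. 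Alternatively, and more in the spirit of ``abstract properties'', one shows directly that if $\phi_2$ were reducible of infinite order then $\wh{\Gamma}_2$ would surject onto $\wh{\Z^2}$ in a way incompatible with $\wh\Gamma_1=\wh{\pi_1(\text{cusped hyperbolic})}$ (e.g.\ via the first betti numbers of finite-index subgroups, which grow differently for graph manifolds than for hyperbolic manifolds), and if $\phi_2$ had finite order then $\Gamma_2$ would be virtually $F_2\times\Z$, whence $\wh\Gamma_2$ would have an open subgroup with centre containing a copy of $\wh\Z$ — again contradicting hyperbolicity of $M_{\phi_1}$.

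For part (2), suppose $\phi_1$ is not hyperbolic; by part (1) neither is $\phi_2$. Now I want to pin down the conjugacy class. Here I would invoke Lemma \ref{lemma2}: for every characteristic finite-index $K\le F_2$, the images of $\phi_1$ and $\phi_2$ generate the same cyclic subgroup of $\Out(F_2/K)$. In particular $o(q(\phi_1))=o(q(\phi_2))$ for the map $q\co\Out(F_2)\to\Out(F_2/K)$, and by the congruence subgroup property (Theorem \ref{CSP}) together with residual finiteness of $\Out(F_2)$, these congruence quotients separate the conjugacy classes of $\phi_1$ and $\phi_2$ as far as order is concerned, and in the finite-order case pin down the conjugacy class outright: finite subgroups of $\GL(2,\Z)$ up to conjugacy are completely determined by their image in a suitable finite congruence quotient (level $3$, say), so $\phi_1$ is conjugate to $\phi_2$. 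In the reducible infinite-order case, one has $\phi_i$ conjugate to $\pm\left(\begin{smallmatrix}1&k_i\\0&1\end{smallmatrix}\right)$; the sign $\pm$ and the residue of $k_i$ modulo each $N$ are read off from congruence quotients via Lemma \ref{lemma2}, and the absolute value $|k_i|$ is recovered because $H_1$ of $\Gamma_i$ (or of suitable finite-index subgroups) records $|k_i|$ — and this is a profinite invariant. Since $\left(\begin{smallmatrix}1&k\\0&1\end{smallmatrix}\right)$ and $\left(\begin{smallmatrix}1&-k\\0&1\end{smallmatrix}\right)$ are conjugate in $\GL(2,\Z)$ (by $\left(\begin{smallmatrix}1&0\\0&-1\end{smallmatrix}\right)$), the value $|k|$ together with the sign determines the conjugacy class, giving $\phi_1\sim\phi_2$; this also explains the footnote that a non-hyperbolic $\phi$ is always conjugate to $\phi^{-1}$.

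The main obstacle I anticipate is \textbf{part (1)} — rigorously certifying that hyperbolicity of $\phi$ is a profinite invariant of $\Gamma$. The delicate point is ruling out the finite-order case: when $\phi$ has finite order, $\Gamma$ is virtually $F_2\times\Z$, which is not hyperbolic and which could a priori be confused profinitely with a cusped hyperbolic manifold group if one is not careful; the clean way through is to observe that in this case $\Gamma$ has a finite-index subgroup with infinite cyclic centre, and centres of open subgroups are detected in $\wh\Gamma$, whereas a cusped hyperbolic $3$-manifold group has no finite-index subgroup with infinite centre. The reducible-infinite-order case is handled similarly but requires comparing the growth of first betti numbers (or the structure of abelian subgroups) along congruence covers, using that graph-manifold groups and hyperbolic-manifold groups behave differently — again a property that passes to profinite completions. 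I would structure the write-up so that all three exclusions are phrased uniformly as: ``if $\phi_i$ is non-hyperbolic then some open subgroup of $\wh\Gamma_i$ has a property incompatible with $\wh\Gamma_{3-i}$ when $\phi_{3-i}$ is hyperbolic,'' thereby reducing everything to the well-understood profinite behaviour of the relevant $3$-manifold groups.
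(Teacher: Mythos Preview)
Your proposal takes a different route from the paper and has a real gap in Part~(2).

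\textbf{Part (1).} The paper's argument is much simpler than the routes you sketch: it cites \cite[Proposition~3.2]{bridson_profinite_2015}, where one shows that $\phi\in\GL$ is hyperbolic if and only if $b_1(\Gamma_{\phi^r})=1$ for every $r>0$. One direction is immediate from Lemma~\ref{l:torsion} (if $\phi$ is hyperbolic then $\det(\phi^r-I)\neq 0$ for all $r$); the other follows from Proposition~\ref{p:types} and Lemma~\ref{l:H1}, since every non-hyperbolic $\phi$ has a power conjugate to $I$ or to some $U(m)$, for which $b_1\ge 2$. Once $b_1(\Gamma_{\phi_1})=1$, the cyclic covers $\Gamma_{\phi_1^r}$ are the preimages of $r\Z$ under the unique map to $\Z$, hence profinitely determined, and their betti numbers are profinite invariants. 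Your betti-number-growth idea is in this spirit but never made concrete, and you misidentify this as the hard half. Note also that your appeal to \cite[Theorem~B]{bridson_profinite_2015} overreaches: that theorem pins down fibring and the rank of the fibre, but explicitly does \emph{not} determine the monodromy.

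\textbf{Part (2).} Your main tool is Lemma~\ref{lemma2}, but its hypothesis is $b_1(\Gamma_i)=1$, which fails for $I$, $\epsilon$, $\tau$ and every $U(n)$ (see Lemma~\ref{l:H1}); you never supply an alternative for these classes, and your ``level~3 congruence'' claim for finite-order elements rests on an inapplicable lemma. The paper instead tabulates $H_1(\Gamma_\phi,\Z)$ across the list in Proposition~\ref{p:types}; this abelianisation is a profinite invariant and already separates all but a handful of pairs. The residual collisions are of two kinds. First, $-I$ and the $-U(n)$ with $n$ even all give $\Z\oplus(\Z/2)^2$ (and the odd-$n$ family all gives $\Z\oplus\Z/4$); here $b_1=1$, so Lemma~\ref{lemma2} does apply and one compares the orders of the induced automorphisms of $H_1(F_2,\Z/(n+1))$. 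Second, $\Gamma_\epsilon$ and $\Gamma_{U(2)}$ both give $\Z^2\oplus\Z/2$; here $b_1=2$, so Lemma~\ref{lemma2} is unavailable, and the paper argues instead that $t^2$ is central in $\Gamma_\epsilon$, forcing every finite quotient $Q$ to have $(Q/Z(Q))^{\mathrm{ab}}$ a quotient of $\Z\oplus(\Z/2)^2$, whereas $\Gamma_{U(2)}$ surjects the mod-$3$ Heisenberg group, whose quotient by centre is $(\Z/3)^2$. Your sketch neither isolates these collisions nor provides any substitute for this last argument, which lies outside the reach of the tools you invoke.
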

\begin{proof} Item (1) was proved in \cite[Proposition 3.2]{bridson_profinite_2015} by arguing $\phi$ is hyperbolic if and only
if $b_1(\G_{\phi^r})=1$ for all $r>0$. (The ``only if" implication follows easily from Lemma \ref{l:H1}.)   We therefore proceed to prove Item (2).

Proposition \ref{p:types} tells us that the list of abelianisations of $\G_\phi$ calculated in Lemma \ref{l:H1}
covers all non-hyperbolic $\phi$. If all of these groups were non-isomorphic then we would be done, but there
remain two ambiguities for which we make special arguments. 

First, to distinguish $\wh\G_{-I}$ 
from $\wh\G_{-U(n)}$ with $n>0$ even, we can invoke Lemma \ref{lemma2}: since $b_1(\G_{-I})=b_1(\G_{-U(n)})=1$,
the automorphisms of $H_1(F_2,\Z/(n+1))$ induced by $-I$ and $-U(n)$ would have the same order if
$\wh\G_{-I}\cong\wh\G_{-U(n)}$, but the former has order $2$ and the latter is
\[
B=\begin{pmatrix} -1& 1\cr 0&  -1\cr\end{pmatrix}
\]
which has order $n+1$.

Second, to distinguish $\wh\G_{\epsilon}$ from $\wh\G_{U(2)}$, we note that $t^2$ is central in
$\G_{\epsilon}=F_2\rtimes_{\epsilon}\<t\>$ and has infinite order in $H_1(\G_{\e},\Z)$, so
if $Q$ is any finite quotient of $\G_{\e}$
then the abelianisation of $Q/Z(Q)$ is a quotient of $\Z\oplus \Z/2 \oplus \Z/2$. On the other
hand,  $F_2\rtimes_{U(2)}\Z$ maps onto the mod-$3$ Heisenberg group
$$
(\Z/3 \oplus \Z/3) \rtimes_{-B}\Z_3,
$$
and the quotient of this group by its centre is $\Z/3 \oplus \Z/3$.
\end{proof}

During the course of the proof of Theorem \ref{thm1}, we will need to argue that $\wh{\Gamma}_\phi\ncong\wh{\Gamma}_{-\phi}$.  The required calculation can be found in \cite[Lemma 3.5]{bridson_profinite_2015}, which we reproduce below for the reader's convenience. 

\begin{lemma}\label{l:torsion}${}$
\begin{enumerate}
\item  $b_1(\Gp)=1$ if and only if $1 + \det \phi \neq  \tr \phi$.
\item If $b_1(\Gp)=1$ then $H_1(\Gp,\Z)\cong\Z\oplus T$, where $|T|= |1+\det\phi - \tr\phi|$.
\end{enumerate}
\end{lemma}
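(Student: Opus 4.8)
The statement to prove is Lemma \ref{l:torsion}, computing $b_1(\Gamma_\phi)$ and the torsion in $H_1(\Gamma_\phi,\Z)$ for $\Gamma_\phi = F_2\rtimes_\phi\Z$.

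The plan is to compute $H_1(\Gamma_\phi,\Z)$ directly from a presentation. Since $\Gamma_\phi = F_2\rtimes_\phi\Z$, abelianising kills commutators, so $H_1(\Gamma_\phi,\Z)$ is the quotient of $\Z^2\oplus\Z$ (the abelianisation of $F_2$ together with the stable letter $t$) by the relations coming from conjugation: $t x t^{-1} = \phi(x)$ for $x\in F_2$. On abelianisation, $\phi$ acts on $\Z^2 = H_1(F_2,\Z)$ as the matrix $\phi\in\mathrm{GL}(2,\Z)$ (using the standard identification $\Out(F_2)\cong\mathrm{GL}(2,\Z)$), so the relation becomes $v = \phi v$ for the image $v\in\Z^2$ of a generator, i.e.\ $(\phi - I)v = 0$. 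Hence $H_1(\Gamma_\phi,\Z)\cong \Z \oplus \mathrm{coker}(\phi - I)$, where the $\Z$ is generated by $t$.

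From here it is linear algebra over $\Z$. First I would note $b_1(\Gamma_\phi) = 1 + \mathrm{rank}_\Z(\mathrm{coker}(\phi-I)) = 1$ exactly when $\det(\phi - I)\neq 0$; expanding, $\det(\phi - I) = \det\phi - \tr\phi + 1$, which gives part (1). For part (2), when $\det(\phi-I)\neq 0$ the cokernel of the $2\times 2$ integer matrix $\phi - I$ is finite of order $|\det(\phi-I)| = |1 + \det\phi - \tr\phi|$ (by Smith normal form, the order of the cokernel of a nonsingular square integer matrix equals the absolute value of its determinant). So $H_1(\Gamma_\phi,\Z)\cong\Z\oplus T$ with $|T| = |1+\det\phi-\tr\phi|$, as claimed.

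There is really no hard part here; the only things to be careful about are (a) checking that the outer automorphism $\phi$ induces a well-defined action on $H_1(F_2,\Z)$ — which is immediate since inner automorphisms act trivially on abelianisation — so that the matrix $\phi - I$ is genuinely well-defined, and (b) recalling the standard fact relating the determinant of a square integer matrix to the order of its cokernel. Both are routine, so I would state the presentation-and-abelianisation computation, reduce to $\mathrm{coker}(\phi - I)$, and then quote these two elementary facts to finish.
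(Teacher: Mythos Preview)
Your proposal is correct and follows essentially the same argument as the paper: abelianise the obvious presentation of $\Gamma_\phi$ to obtain $H_1(\Gamma_\phi,\Z)\cong\Z\oplus\mathrm{coker}(\phi-I)$, then compute $\det(\phi-I)=1-\tr\phi+\det\phi$ and invoke the standard fact that the cokernel of a nonsingular integer matrix has order equal to the absolute value of its determinant. Your remarks about well-definedness of the action on abelianisation and the Smith normal form justification are slightly more explicit than the paper's version, but the structure is identical.
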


\begin{proof} By choosing a representative $\phi_*\in {\rm{Aut}}(F_2)$, we get a presentation for $\Gp$,
$$
\< a,b, t \mid tat^{-1} = \phi_*(a),\, tbt^{-1}=\phi_*(b)\>.
$$
By abelianising, we see that $H_1(\Gp,\Z)$ is the direct sum of $\Z$ (generated by the image of $t$)
and $\Z^2$ modulo the image of $\phi-I$. The image of $\phi-I$ has  finite index if and only if $\det(\phi-I)$
is non-zero, and a trivial calculation shows that this determinant is $1 - \tr\phi + \det\phi$. If the index is
finite, then the quotient has order $|\det(\phi - I)|$.
\end{proof}

\subsection{End of the proof: the hyperbolic case}

\begin{proof}[Proof of Theorem \ref{thm1}] By Theorem \ref{t:pf2conj}, we may assume that both $\phi_1$ and $\phi_2$
are hyperbolic.  In particular, $b_1(\G_1)=b_1(\G_2)=1$ and so, by Proposition \ref{CSPprop}, we may invoke Theorem \ref{generalthm} to deduce that there is an $n$ such that $\phi_1^n=\phi_2^{\pm n}$.  It then follows from Lemma \ref{l:roots} that $\phi_1$ is conjugate to $\pm \phi_2^{\pm 1}$.  To remove the possibility that $\phi_1$ is conjugate to $-\phi_2^{\pm1}$ we use Lemma \ref{l:torsion} to compare the order of the torsion
subgroup  in $H_1(\G_{\phi},\Z)$ with that in $H_1(\G_{-\phi},\Z)$, noting that
$\det(-\phi) = \det \phi$ but $\tr(-\phi) = -\tr\phi$.
\end{proof}

\section{Closed hyperbolic bundles with $b_1(M)=1$}

We now shift our attention to {\em closed} 3-manifolds.
Our purpose in this section is to prove Theorem \ref{closedprofinite}.  We therefore assume that $M$ is a closed, orientable, hyperbolic 3-manifold with $b_1M=1$, fibring over the circle with fibre $\Sigma$, and that $N$ is a closed, orientable 3-manifold with $\wh{\pi_1M}\cong\wh{\pi_1N}$.  We have been informed by Boileau and Friedl that the methods of their paper \cite{boileau_profinite_2015} can be used to give a different proof of Theorem \ref{closedprofinite}.  Extending the result to bundles with $b_1(M)>1$ lies beyond the present scope of  both our techniques and theirs.

\subsection{The main argument}

By arguing as in Theorem 4.1 of \cite{bridson_profinite_2015}, we may assume that $N$ is aspherical, closed and orientable. Since the finite abelian quotients of $\pi_1N$ coincide with those of $\pi_1M$, we also see that $b_1(N)=1$.  And by \cite{wilton_distinguishing_2017} we know that $N$ is hyperbolic.

Set $\Delta=\pi_1(N)$ and $\Gamma=\pi_1(M)$.  There is a unique epimorphism $\Delta\rightarrow {\Bbb Z}$, and dual to this we can find a closed embedded non-separating incompressible surface $S\subset N$.  If $S$ is a fibre we will be done, for exactly as in Lemma 3.1 of \cite{bridson_profinite_2015}, we get $\widehat{\pi_1(S)} \cong \widehat{\pi_1(\Sigma)}$, from which it easily follows (by noting, for example, 
that $H_1(\Sigma,{\Bbb Z})$ is determined by $\wh{\pi_1\Sigma}$) that $S$ is homeomorphic to $\Sigma$.

Thus, in order to complete our proof of the theorem, it suffices to derive a contradiction from the assumption
that $S$ is not a fibre. The well-known dichotomy of Bonahon and Thurston \cite{bonahon_bouts_1986} 
implies that if $S$ is not a fibre then it is quasi-Fuchsian.
 
 Let $H=\pi_1(S)<\Delta$, let $G=\pi_1\Sigma$, let $K<\Delta$ be the 
 kernel of the unique epimorphism $\Delta\to\Z$,  let $\overline{K}$ denote the closure of
 $K$ in $\widehat\Delta$, and note that $H<K$. 
It is elementary to see that $\G$ induces the full profinite topology on $G$
(see \cite[Lemma 2.2]{bridson_profinite_2015} for example),
and it follows from Agol's virtually special theorem \cite{agol_virtual_2013} that, since $H$ is quasiconvex and hence a virtual retract, the full profinite topology is induced on $H$ (see, for example,  \cite[(L.16), p.\ 120]{aschenbrenner_manifold_2015}).
 
The isomorphism $\widehat\G\cong\widehat\Delta$ identifies $\widehat G$ with $\overline K$
 (each being the kernel of the unique epimorphism $\widehat\G\to\widehat\Z$). Thus
 $\widehat H = \overline H$ is a subgroup of $\widehat G$.
 To complete
 the proof of the theorem, we need two lemmas. The first of these lemmas
 relies on Agol's theorem  \cite{agol_virtual_2013}, while the second  
  is based on a standard exercise about duality groups at a prime $p$ that was drawn to our attention by P. Zalesskii.

\begin{lemma}\label{freeonfiniteindex}
There is a finite index subgroup $\Delta_0<\Delta$
such that:
\begin{enumerate}
\item~$H < \Delta_0$ ;
\item there exists an epimorphism $f:\Delta_0\rightarrow F_2$ (a free
group of rank $2$) such that $H<\ker~f$).
\end{enumerate}
\end{lemma}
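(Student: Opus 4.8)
\emph{Overview.} The plan is to pass to a finite cover $N_0$ of $N$ in which a suitable lift $\widetilde S$ of $S$ is non-separating and $b_1$ is large enough, and then to read off $f$ from the HNN decomposition of $\pi_1 N_0$ along $\widetilde S$, using Poincar\'e--Lefschetz duality. The hard part is producing the cover; the passage from the cover to $f$ is a short homological computation.

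\emph{Setting up the cover and checking (1).} Since $N$ is hyperbolic, Agol's theorem shows $\Delta$ is virtually compact special, and since $S$ is quasi-Fuchsian the subgroup $H$ is quasiconvex, hence a virtual retract of $\Delta$; in particular $H$ lies in finite-index subgroups of $\Delta$. For any finite-index $\Delta_0\le\Delta$ with $H\le\Delta_0$, the associated cover $N_0\to N$ contains an embedded surface $\widetilde S$ with $\pi_1\widetilde S=H$, namely the component of the preimage of $S$ determined by $H$; since $H\le\Delta_0$ the restriction $\widetilde S\to S$ is a homeomorphism, so $\widetilde S$ is two-sided (as $S$ is Poincar\'e dual to an integral class), and since its class pushes forward to $[S]\ne 0$ in $H_2(N;\mathbb Z)$ it is non-separating. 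Thus (1) holds for whatever suitable $\Delta_0$ we manage to produce.

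\emph{From large $b_1$ to $f$.} Write $2g=b_1(S)$ (so $g\ge 2$, as $S$ is incompressible in a hyperbolic manifold). I claim that if, in addition, $b_1(\Delta_0)\ge 2g+2$, then $\Delta_0$ has an epimorphism onto $F_2$ with $H$ in the kernel. Cut $N_0$ along $\widetilde S$ to get a compact orientable $3$-manifold $P_0$ with $\partial P_0=\widetilde S_-\sqcup\widetilde S_+$ (two copies of $S$), so that $\Delta_0$ is an HNN extension of $\pi_1P_0$ with stable letter $t$ and associated subgroups the images $\iota_\pm(H)$ of the edge inclusions $\iota_\pm\colon H\hookrightarrow\pi_1P_0$ induced by $\widetilde S_\pm$. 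Abelianising this presentation shows that the image of $H_1(P_0;\mathbb Q)$ in $H_1(\Delta_0;\mathbb Q)$ has corank one (the cokernel being generated by $t$), whence $b_1(P_0)\ge b_1(\Delta_0)-1\ge 2g+1$; on the other hand ``half lives, half dies'' shows the image of $H^1(P_0;\mathbb Q)$ in $H^1(\partial P_0;\mathbb Q)$ has dimension $\tfrac12 b_1(\partial P_0)=2g$. Hence the kernel of $H^1(P_0;\mathbb Q)\to H^1(\partial P_0;\mathbb Q)$ is nonzero, and a non-torsion integral class therein gives an epimorphism $\psi\colon\pi_1P_0\to\mathbb Z$ with $\psi\circ\iota_-=\psi\circ\iota_+=0$. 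Sending $\pi_1P_0$ to $\mathbb Z$ by $\psi$ and $t$ to a second free generator then defines a homomorphism $\Delta_0\to\mathbb Z\ast\mathbb Z=F_2$: it is well defined because $\psi$ kills both edge groups, surjective because its image contains $\psi(\pi_1P_0)=\mathbb Z$ and the image of $t$, and it kills $H$ because $H$ lies in $\pi_1P_0$ as the edge group $\iota_-(H)$, which $\psi$ kills. This gives (2).

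\emph{The main obstacle: producing the cover.} It remains to find a finite-index $\Delta_0\le\Delta$ with $H\le\Delta_0$ and $b_1(\Delta_0)\ge 2g+2$; equivalently, a finite cover of $N$ to which $\widetilde S$ lifts and in which the kernel of the restriction $H^1(-;\mathbb Q)\to H^1(\widetilde S;\mathbb Q)$ has dimension at least two. I expect this to be the real work, and to need the cubical geometry of Agol's theorem rather than a soft separability argument. The strategy: realise a finite-index subgroup of $\Delta$ containing $H$ as $\pi_1$ of a compact special cube complex $X$ in which $H$ is carried---after a further finite cover and the Haglund--Wise canonical completion---by a compact embedded convex subcomplex $Y$ with $\pi_1Y=H$. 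Since $Y$ meets only finitely many hyperplanes of $X$, on passing to a sufficiently large finite cover $X_0$ to which $Y$ lifts there appear hyperplanes of $X_0$ disjoint from $Y$; arranging two of them to be disjoint, two-sided and non-separating with connected complement, their dual classes are two independent elements of the kernel of $H^1(X_0;\mathbb Q)\to H^1(Y;\mathbb Q)$ (indeed, collapsing $X_0$ onto the wedge of the two dual circles already produces an epimorphism $\pi_1X_0\to F_2$ killing $H$, so the duality argument above is a convenient but not logically essential packaging once such a pair of hyperplanes is in hand). Making this precise---guaranteeing the collapse is onto $F_2$, and that enough independent hyperplanes avoiding the fixed subcomplex $Y$ occur in a finite cover---is the delicate step, resting on the separability of hyperplane carriers in special cube complexes together with residual finiteness to enlarge the cover.
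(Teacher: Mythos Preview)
Your reduction ``large $b_1$ $\Rightarrow$ existence of $f$'' via half-lives--half-dies is correct and elegant: once $b_1(\Delta_0)\ge 2g+2$, the cohomology count you give does produce a surjection $\psi:\pi_1 P_0\to\Z$ vanishing on both boundary inclusions, and the extension to an epimorphism $\Delta_0\to\Z*\Z$ killing $H$ is exactly right. This is a genuinely different packaging from the paper's argument.

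The gap is in your final paragraph. You correctly identify the production of a finite-index $\Delta_0\supset H$ with $b_1(\Delta_0)\ge 2g+2$ as the crux, but the cube-complex sketch you give does not constitute a proof. You assert that in a suitable cover one can arrange two hyperplanes to be disjoint from the lift of $Y$, two-sided, non-separating, and with connected joint complement, but you give no mechanism for the last two conditions: a hyperplane disjoint from $Y$ may well be separating, and two non-separating hyperplanes need not have connected joint complement. Separability of hyperplane carriers and residual finiteness alone do not deliver this. As written, the argument is incomplete at precisely the point you flag as delicate.

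The paper takes a more direct route that sidesteps your obstacle entirely. Rather than seeking large $b_1$, it exploits \emph{double coset separability}: by Agol's theorem $\Delta$ is virtually special, and combining Haglund--Wise with Minasyan, the double cosets in $H\backslash\Delta/H$ are separable. An elementary Bass--Serre argument (edges of the tree at arbitrary distance from the edge fixed by $H$) shows $H\backslash\Delta/H$ is infinite, so one can choose $\Delta_0\supset H$ of finite index with $|\Delta_0\backslash\Delta/H|\ge 4$. In the corresponding cover $N_0$, the preimage of $S$ then has at least four components $S_1,\dots,S_k$, with $S_1$ equal to your $\widetilde S$. Now, instead of cutting along $\widetilde S=S_1$ as you do, the paper cuts $N_0$ along three \emph{other} components $S_2,S_3,S_4$; the collapse map to the dual graph $X$ crushes $S_1$---and hence $H$---to a vertex, and $\pi_1 X$ is the desired free group.

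What each approach buys: the paper's argument trades your Poincar\'e-duality computation for a single citation (double coset separability) and never needs to control $b_1$ of the cover. Your argument, by contrast, isolates a clean sufficient condition---two independent classes in $\ker\big(H^1(\Delta_0;\Q)\to H^1(H;\Q)\big)$---which is conceptually appealing and could in principle be fed by the paper's construction (the extra components $S_i$ supply exactly such classes). The two viewpoints are thus complementary, but only the paper's actually closes the loop.
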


\begin{lemma}
\label{zalesskii}  $[\widehat{G}:\widehat{H}]<\infty$.
\end{lemma}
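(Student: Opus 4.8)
The statement $[\widehat G:\widehat H]<\infty$ is a purely profinite assertion, and the proof should flow from the cohomological structure of the groups involved. Since $M$ is a closed hyperbolic surface bundle, $G=\pi_1\Sigma$ is a surface group, hence a Poincar\'e duality group of dimension $2$; correspondingly $\widehat G$ is a profinite Poincar\'e duality group of dimension $2$ at every prime $p$. On the other side, $H=\pi_1(S)$ is also a closed surface group, so $\widehat H$ is a profinite $\mathrm{PD}_2$ group at every prime $p$ as well. The plan is to exploit the fact that $\widehat H$ sits inside $\widehat G$ as a \emph{closed} subgroup of the same cohomological dimension, and then invoke the profinite analogue of the classical fact (for discrete duality groups) that a subgroup of the same dimension in a $\mathrm{PD}_n$ group has finite index.

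\textbf{Key steps.} First I would record that $\widehat H$ is a closed subgroup of $\widehat G$: this is exactly what is established just before the lemma, using that $\Gamma$ induces the full profinite topology on $G$ (via \cite[Lemma 2.2]{bridson_profinite_2015}) and that, by Agol's theorem and quasiconvexity of $S$, the full profinite topology is induced on $H$, so that $\widehat H=\overline H\le\widehat G$. Second, fix a prime $p$ and pass to pro-$p$ completions, or rather localise: both $G$ and $H$ are surface groups, hence good in the sense of Serre, so $\widehat G$ and $\widehat H$ are $\mathrm{PD}_2$ groups at $p$, with $\mathrm{cd}_p\widehat G=\mathrm{cd}_p\widehat H=2$. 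Third, apply the standard cohomological criterion: if $U$ is a closed subgroup of a profinite $\mathrm{PD}_n$ group $P$ at $p$ with $\mathrm{cd}_p U=n$, then $U$ is open in $P$ (equivalently of finite index); this is the ``standard exercise about duality groups at a prime $p$'' the authors attribute to Zalesskii. One can prove it by noting that $H^n(U;\mathbb F_p)=\mathbb F_p\ne 0$ forces $U$ to be open, since for a proper closed subgroup of infinite index one would have $\mathrm{cd}_p U<\mathrm{cd}_p P$ by the Serre-type restriction/corestriction argument or by looking at $H^n_c$. Fourth, conclude that $\widehat H$ has finite index in $\widehat G$: having finite index at every prime $p$ (or even just at one prime suffices once one knows $\widehat H$ is open) gives $[\widehat G:\widehat H]<\infty$.

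\textbf{Main obstacle.} The technical crux is the third step: establishing the profinite duality-group statement that a closed subgroup of full cohomological dimension in a $\mathrm{PD}_n$ group is open. One must be careful that $\mathrm{cd}_p\widehat H=2$ genuinely holds --- this uses goodness of surface groups so that $H^*(\widehat H;\mathbb F_p)\cong H^*(H;\mathbb F_p)$, and likewise for $G$ --- and that the ambient group $\widehat G$ really is a profinite $\mathrm{PD}_2$ group at $p$ (again by goodness plus Poincar\'e duality for the closed surface $\Sigma$). The subtlety is that, a priori, a closed subgroup could have the same $\mathrm{cd}_p$ without being open (this fails for general profinite groups), so one genuinely needs the duality hypothesis; the argument is that in a profinite $\mathrm{PD}_n$ group at $p$ the dualising module structure forces $H^n(U;\mathbb F_p)\ne 0$ to imply $[P:U]$ is finite and prime to nothing relevant, i.e.\ that $U$ is open. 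I would cite the relevant fact from the literature on profinite duality groups (e.g.\ Symonds--Weigel, or Serre--Tate-style arguments as in \cite{ribes_profinite_2010}) rather than reprove it, and then the lemma follows in one line from the combination: $\widehat H$ is a closed subgroup of $\widehat G$ of equal $p$-cohomological dimension $2$, hence open, hence of finite index.
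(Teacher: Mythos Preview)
Your third step invokes a general principle that is \emph{false} for profinite groups that are not pro-$p$: the claim that a closed subgroup $U$ of a profinite $\mathrm{PD}_n$ group $P$ at $p$ with $\mathrm{cd}_p U=n$ must be open. Consider $U=\Z_p^{\,2}$ inside $P=\widehat{\Z}^{\,2}$. Since $\Z^2$ is good, $P$ satisfies $\mathrm{cd}_p P=2$, every open subgroup $V$ has $H^2(V;\F_p)\cong\F_p$, and the restriction maps are multiplication by the index --- so $P$ is $\mathrm{PD}_2$ at $p$ in every sense relevant here. Yet the $p$-Sylow $U=\Z_p^{\,2}$ is a closed subgroup with $\mathrm{cd}_p U=2$ and $H^2(U;\F_p)\cong\F_p$, while $[P:U]$ is infinite. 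The Serre exercise (and the literature you propose to cite) only yields that the \emph{$p$-part} of the supernatural index $[P:U]$ is finite. Knowing this for every prime still does not give $[\widehat G:\widehat H]<\infty$: a supernatural number can have finite $p$-part at each prime and still be infinite. Your fourth step conflates ``open'' with ``finite $p$-part of the index'', and the parenthetical ``once one knows $\widehat H$ is open'' is circular.

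The paper fills this gap via Lemma~\ref{freeonfiniteindex}, which your proposal does not use. That lemma produces a surjection $f\colon\Delta_0\to F_2$ with $H\subseteq\ker f$; composing with projection to the pro-$p$ completion $\widehat F_2^{(p)}$ and restricting to $\widehat G_0$, one obtains a surjection onto a nontrivial (hence infinite) pro-$p$ group $\widehat L$ with $\widehat H$ contained in the kernel. Assuming $[\widehat G_0:\widehat H]=\infty$, this lets one build a cofinal chain of open subgroups $W_i\searrow\widehat H$ for which $p\mid[W_i:W_{i+1}]$ at every step. The direct-limit computation $H^2(\widehat H;\F_p)=\varinjlim H^2(W_i;\F_p)$ then vanishes, since each transition map is multiplication by a multiple of $p$, contradicting goodness of the surface group $H$. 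In other words, Lemma~\ref{freeonfiniteindex} is used precisely to force the index to have infinite $p$-part at one fixed prime, which is exactly what your cohomological-dimension argument cannot supply on its own.
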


We defer the proofs of these lemmas for a moment while we complete the proof of the theorem.

Define $\G_0=\G\cap\widehat\D_0$ and  $G_0=G\cap\G_0$. The surjection $\Delta_0\to F_2$
of Lemma \ref{freeonfiniteindex} induces an epimorphism  $\widehat\G_0=\widehat{\Delta}_0 \rightarrow \widehat{F}_2$,
the kernel of which contains $\widehat H$. It follows from Lemma \ref{zalesskii} that the image of $\wh{G}_0$ in 
$\widehat{F}_2$
is finite. But $\widehat{F}_2$ is torsion-free, so in fact the image of $\widehat{G}_0$ must be trivial, which
means that $\widehat{\G}_0=\widehat{\Delta}_0 \rightarrow \widehat{F}_2$ factors through the abelian
group $\widehat{\G}_0/\widehat{G}_0\cong\widehat{\Z}$, which is impossible. This contradiction
completes the proof of Theorem \ref{closedprofinite}.
\qed

\subsection{Proofs of lemmas}



\begin{proof}[Proof of Lemma \ref{freeonfiniteindex}]
We first argue that there are infinitely many double cosets $H\backslash\Delta/H$.  The group $\Delta$ acts on the Bass--Serre tree $T$ corresponding to the splitting of $\Delta$ obtained by cutting $N$ along $S$.  Let $e$ be the edge of $T$ stabilized by $H$.   The set of double cosets $H\backslash\Delta/H$ is in bijection with the orbits of the edges of $T$ under the action of $H$ on $T$.   Since $H$ acts on $T$ by isometries and there are edges of $T$ at arbitrarily large distance from $e$, it follows that $H\backslash\Delta/H$ is infinite.

By \cite{agol_virtual_2013}, $\Delta$ is virtually special.  Combining the results of \cite{haglund_special_2008} and \cite{minasyan_separable_2006}, the double cosets in $H\backslash\Delta/H$ are separable. Hence there exists a subgroup $\Delta_0$ of finite index in $\Delta$, containing $H$, so that $|\Delta_0\backslash\Delta/H|\geq 4$.   Let $N_0$ be the covering space of $N$ corresponding to $\Delta_0$. Then the complete preimage $S_0\subseteq N_0$ of the surface $S$ 
is embedded, and the components of $S_1,\ldots,S_k$ of $S_0$ naturally correspond to the double cosets $\Delta_0\backslash \Delta/H$. If $S_1$ is the component corresponding to the trivial double coset $\Delta_0H=\Delta_0$, then $S_1$ is homeomorphic to $S$, since $\Delta_0$ contains $H$.  Choose three components $S_2$, $S_3$, $S_4$ of $S_0$, each distinct from $S_1$.  Let $X$ be the dual graph to the decomposition of $N_0$ obtained by cutting along $S_2\cup S_3\cup S_4$.  Then $X$ has three non-separating edges, and hence the fundamental group $F$ of $X$ is free and non-abelian.  Finally, $H$ is in the kernel of the natural epimorphism $q:\Delta_0\to F$, since $q$ is induced by a continuous map $N_0\to X$ that crushes $S_1$ to a vertex.
\end{proof}

We now turn to Lemma \ref{zalesskii}.  We are grateful to Pavel Zalesskii for drawing
 our attention to \cite[ p.\ 44, Exercise 5(b)]{serre_galois_1997}, which guides the proof.
\begin{proof}[Proof of Lemma \ref{zalesskii}]
Suppose for a contradiction that $[\wh{G}_0:\wh{H}]=\infty$.  Since $\widehat{H}$ is closed, we may choose nested finite-index subgroups $U_i$ in $G_0$ so that the intersection $\bigcap_i \widehat{U}_i=\widehat{H}$.

We fix a prime $p$, 
consider the map $f:\Delta_0\to F_2$ provided by Lemma \ref{freeonfiniteindex}, and let $\hat{f}:\wh{\Delta}_0\to\wh{F}^{(p)}_2$ be the composition of the induced map on profinite completions and the
projection from $\wh{F}_2$ to the pro-$p$ completion of $F_2$.  Since $\wh{F}^{(p)}_2$ is non-abelian, $\hat{f}$ certainly does not factor through the quotient map $\wh{\Delta}_0\to\wh{\Delta}_0/\wh{G}_0\cong\wh{\Z}$, so 
the closed subgroup $\wh{L}:=\hat{f}(\wh{G}_0)$ is non-trivial.  Choose an infinite nested sequence of open subgroups $V_i\subseteq\wh{L}$ with trivial intersection and let $W_i:=\wh{U}_i\cap \hat{f}^{-1}(V_i)$. Then
$\bigcap_i {W}_i=\widehat{H}$ and $p$ divides the index $[{W}_i:{W}_{i+1}]$ for infinitely many $i$, so, passing to a subsequence, we may assume that $p$ divides $[{W}_i:{W}_{i+1}]$ for all $i$.

The end of the argument is a standard exercise about duality groups at the prime $p$ (cf.\ \cite[p.\ 44, Exercise 5(b)]{serre_galois_1997}).  We consider continuous cohomology with coefficients in the finite field $\F_p$.  
As a finite-index subgroup of a surface group, each $W_i\cap G_0$ is a 
 surface group, hence it is good in the sense of Serre, which means that each
 of the restriction maps $H^2({W}_i,\F_p)\to H^2({W}_j,\F_p)\cong \F_p$ is multiplication by $[W_i:W_j]$.  Since
\[
H^2(\wh{H},\F_p)=H^2\left(\bigcap_i {W}_i,\F_p\right)=\underrightarrow{\lim}~H^2({W}_i,\F_p)~
\]
 and $p$  divides $[W_i:W_{i+1}]$, we conclude that $H^2(\wh{H},\F_p)=0$, which is a contradiction, since $H$ is
 also a surface group.
\end{proof}

\section{Surfaces of higher complexity and free groups of higher rank}

As far as we know, the hypotheses of Theorem \ref{generalthm} may hold in very great generality. In the general context of mapping class groups, the congruence subgroup property is open, as is omnipotence for pseudo-Anosov elements.  Likewise, in the context of outer automorphism groups of free groups, the congruence subgroup property is open, as is omnipotence for hyperbolic automorphisms. 


\begin{question}\label{generalqmod}
Let $\Sigma$ be a surface of finite type. Might the set of pseudo-Anosovs in the mapping class group $\mathrm{Mod}(\Sigma)$ be $\pi_1\Sigma$-congruence omnipotent?
\end{question}

A positive answer to Question \ref{generalqmod} would have significant ramifications.
For example, it would immediately imply that if $M$ is a closed
hyperbolic 3-manifold with $b_1(M)=1$ and $N$ is a compact 3-manifold with
$\wh{\pi_1M}\cong \wh{\pi_1N}$ then $M$ and $N$ share a common
finite cyclic cover (of the same degree over $N$ and $M$); in particular they are cyclically
commensurable.

The closedness hypothesis assures that the manifolds have homeomorphic fibres. 
Less obviously, if $M$ and $N$ are hyperbolic knot complements in $S^3$ (or in an integral homology sphere)
with $\wh{\pi_1M}\cong\wh{\pi_1N}$, then \cite[Theorem 7.2]{bridson_profinite_2015}  implies
that the fibres are homeomorphic, so again a positive answer to Question 5.1 would imply
that then $M$ and $N$ share a common
finite cyclic cover.  These observations gain further interest in the context
of the following conjecture of the second author and G. Walsh \cite{reid_commensurability_2008}.

\begin{conjecture}
\label{3knots}
Let $K\subset S^3$ be a hyperbolic knot. There are at most $3$ distinct knot
complements in the commensurability class of $S^3\setminus K$.\end{conjecture}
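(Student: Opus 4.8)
The plan is to organise the proof around the commensurator, following the strategy initiated by Reid and Walsh \cite{reid_commensurability_2008} and developed by Boileau, Boyer, Cebanu and Walsh. Since the figure-eight knot is the unique arithmetic hyperbolic knot and is the only knot complement in its commensurability class (Reid), I may assume $M=S^3\ssm K$ is non-arithmetic, so that by Margulis the commensurator $C=\mathrm{Comm}(\pi_1 M)$ is a lattice and $O=\mathbb{H}^3/C$ is the minimal orbifold of the class. A knot $K'\subset S^3$ with $S^3\ssm K'$ commensurable with $M$ corresponds to a torsion-free finite-index subgroup $\Gamma'<C$ with $\mathbb{H}^3/\Gamma'\cong S^3\ssm K'$, so the task is to bound, up to conjugacy in $C$, the number of such subgroups whose quotient is a knot complement in $S^3$. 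I would then split into cases according to the cusp cross-section of $O$: the flexible cases, where the cusp is a torus or a Euclidean $(2,2,2,2)$-orbifold and $K$ has no hidden symmetries; and the rigid cases, where the cusp is one of $S^2(2,3,6)$, $S^2(3,3,3)$, $S^2(2,4,4)$, so $K$ has hidden symmetries and, by Neumann--Reid, the invariant trace field of $M$ is $\Q(\sqrt{-1})$ or $\Q(\sqrt{-3})$.

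For knots without hidden symmetries the conjecture is essentially known. One first shows that any two knots in the commensurability class are cyclically commensurable --- they share a common cyclic cover --- and in fact arise from a single cusped orbifold by orbifold Dehn filling; the number of fillings that can yield a knot complement in $S^3$ is then controlled by the Cyclic Surgery Theorem of Culler--Gordon--Luecke--Shalen, together with the observation that a cyclic commensurability class containing more than one knot forces a knot in $S^3$ with a lens-space Dehn surgery. Boileau--Boyer--Cebanu--Walsh carry this out to obtain the bound $3$ in this case, with a precise description (conditional on the Berge conjecture) of when the class is non-trivial, the $(-2,3,7)$-pretzel class being exceptional. To make this case fully unconditional one would combine the finiteness of exceptional slopes (Lackenby--Meyerhoff) with a case-by-case treatment of the finitely many possible lens-space surgeries; this is delicate but, I expect, within reach.

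The real obstacle is the hidden-symmetries case, and here the plan is to prove the Neumann--Reid conjecture that the figure-eight knot and the two Aitchison--Rubinstein dodecahedral knots are the only hyperbolic knots with hidden symmetries. Since having hidden symmetries is a commensurability invariant, this would leave only the (arithmetic) figure-eight class and the single non-arithmetic dodecahedral class --- which, by work of Hoffman, contains exactly its two knots --- so Conjecture \ref{3knots} would follow immediately. Concretely I would try to: (i) show $O$ covers one of a small, explicit family of cusped orbifolds built from the Bianchi groups over $\Q(\sqrt{-1})$ and $\Q(\sqrt{-3})$; (ii) classify the one-cusped torsion-free covers whose cusp shape and homology (the knot longitude must be null-homologous) are compatible with a knot complement in $S^3$; and (iii) rule out sporadic solutions using a covolume count and the classification of small-volume cusped hyperbolic orbifolds (Meyerhoff, Gehring--Martin, and later work). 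The crux is step (iii): there is at present no a priori bound on $\mathrm{vol}(O)$ over all knots with hidden symmetries, so the search is not yet finite, and excluding exotic examples seems to need genuinely new ideas --- which is precisely why the conjecture is still open. I would also note, as the motivation visible in this paper, that a positive answer to Question \ref{generalqmod} renders profinitely isomorphic hyperbolic knot complements cyclically commensurable, so the cyclic-commensurability content of Conjecture \ref{3knots} would then bound by $3$ the number of hyperbolic knot complements with a given profinite completion; suggestively, the obstruction lives exactly in the arithmetic-flavoured regime where profinite methods are strongest.
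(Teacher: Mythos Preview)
The statement you have been asked to prove is a \emph{conjecture}: the paper does not prove it, and indeed explicitly records it as open. Immediately after stating Conjecture~\ref{3knots}, the paper notes that it was established in \cite{boileau_knot_2012} only in the ``generic case'' of knots without hidden symmetries, and that the hidden-symmetries case remains unresolved because the Neumann--Reid conjecture (that the figure-eight and the two dodecahedral knots are the only hyperbolic knots with hidden symmetries) is still open. There is therefore no proof in the paper against which to compare your proposal.

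Your write-up is an accurate and well-informed survey of the known strategy, and you yourself correctly identify the gap: step~(iii) of your plan for the hidden-symmetries case requires bounding the covolume of the minimal orbifold $O$ a priori, and no such bound is currently available. That is not a flaw in your exposition but a genuine obstruction in the mathematics; your proposal is a roadmap, not a proof, and you say as much. One small correction to your discussion of the no-hidden-symmetries case: the bound of~$3$ there is already unconditional in \cite{boileau_knot_2012} (it is the \emph{classification} of when equality is attained that is tied to the Berge conjecture), so no further work with Lackenby--Meyerhoff is needed for the bound itself.
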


Conjecture \ref{3knots} was proved  in \cite{boileau_knot_2012} in the ``generic case", namely when $K$ has {\em no hidden symmetries} (see  \cite{reid_commensurability_2008} or \cite{boileau_knot_2012} for the definition of hidden symmetry). At present the only knots that are known to have hidden symmetries are the figure-eight knot and the two dodecahedral knots of Aitchison and Rubinstein \cite{aitchison_combinatorial_1992}.  The dodecahedral knots are known to be the only knots in their commensurability class \cite{hoffman_knot_2014}, and their fundamental groups are distinguished by their profinite completions 
using 
\cite{bridson_profinite_2015}, since
one is fibred and the other is not. Since the figure-eight knot group is distinguished from all 3-manifold groups by its profinite completion, the proviso concerning hidden symmetries in the following result may be unnecessary.

\begin{proposition}\label{profinite_knots1}
Let $K\subset S^3$ be a fibred hyperbolic knot. 
If Question \ref{generalqmod} has a positive answer and $K$ has no hidden symmetries, 
then there is no other hyperbolic knot $K'$
such that $\pi_1(S^3\smallsetminus K)$ and $\pi_1(S^3\smallsetminus K')$ have the
same profinite completion. 
\end{proposition}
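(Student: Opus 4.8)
The plan is to combine three ingredients: the fibring theorems of \cite{bridson_profinite_2015}, the hypothetical congruence omnipotence of Question \ref{generalqmod} (used through Theorem \ref{generalthm}), and the structure theory for commensurability classes of knots without hidden symmetries from \cite{boileau_knot_2012}.

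First I would pin down the two manifolds. Put $M=S^3\smallsetminus K$, and suppose $N=S^3\smallsetminus K'$ is a hyperbolic knot complement with $\wh{\pi_1N}\cong\wh{\pi_1M}$. Both are knot complements in $S^3$, so $H_1(M,\Z)\cong H_1(N,\Z)\cong\Z$, each admits a unique epimorphism onto $\Z$, and $b_1(M)=b_1(N)=1$. Since $K$ is fibred, $\pi_1M\cong\pi_1\Sigma\rtimes_{\phi_1}\Z$, where $\Sigma=\Sigma_{g,1}$ is the fibre and, as $M$ is hyperbolic, $\phi_1\in\Mod(\Sigma)$ is pseudo-Anosov. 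By \cite[Theorem B]{bridson_profinite_2015}, $N$ has non-empty boundary, fibres over the circle, and $\pi_1N\cong\pi_1\Sigma'\rtimes_{\phi_2}\Z$ with $\chi(\Sigma')=\chi(\Sigma)$; and by \cite[Theorem 7.2]{bridson_profinite_2015}, as recalled in the discussion preceding this statement, the fibres of $M$ and $N$ are homeomorphic, so $\Sigma'\cong\Sigma$. Because $N$ is hyperbolic, $\phi_2\in\Mod(\Sigma)$ is again pseudo-Anosov.

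Next I would apply congruence omnipotence. A positive answer to Question \ref{generalqmod} says precisely that the set $\mathcal S$ of pseudo-Anosovs in $\Mod(\Sigma)\subseteq\Out(\pi_1\Sigma)$ is $\pi_1\Sigma$-congruence omnipotent, so Theorem \ref{generalthm} applies to $\phi_1,\phi_2\in\mathcal S$ (noting $b_1(\pi_1M)=b_1(\pi_1N)=1$): there is an integer $n\ge 1$ with $\phi_1^{\,n}$ conjugate in $\Mod(\Sigma)$ to $\phi_2^{\pm n}$. Passing to mapping tori, and using that $M_\psi$ is homeomorphic to $M_{\psi^{-1}}$, this says that the $n$-fold cyclic covers of $M$ and of $N$ determined by their (unique) fibrations are homeomorphic to one manifold $W$; that is, $M$ and $N$ share a common finite cyclic cover of degree $n$ over each, as foreseen in the discussion above. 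In particular $K$ and $K'$ are cyclically commensurable in the sense of \cite{reid_commensurability_2008} --- a fortiori commensurable --- and $\mathrm{vol}(M)=\tfrac1n\mathrm{vol}(W)=\mathrm{vol}(N)$.

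Finally I would bring in the hypothesis that $K$ has no hidden symmetries: by \cite{boileau_knot_2012} the commensurability class of $S^3\smallsetminus K$ then contains at most three knot complements, all cyclically commensurable and related in the explicit way described there, so it remains to show $N$ is the \emph{same} complement as $M$ rather than one of the (at most two) others. This is the crux. Since the unique epimorphisms to $\Z$, hence to $\wh\Z$, are respected up to an automorphism of $\wh\Z$, the identification $\wh{\pi_1M}\cong\wh{\pi_1N}$ matches the closure of the fibre group $\pi_1\Sigma$ in $\wh{\pi_1M}$ with the one in $\wh{\pi_1N}$, and likewise matches the finite-index subgroup $\wh{\pi_1W}$ (the preimage of $n\wh\Z$). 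It therefore restricts to an isomorphism of the single profinite group $\wh{\pi_1W}$ which, up to an automorphism of $\Z/n$, intertwines the two deck-transformation homomorphisms $\Z/n\to\Out(\wh{\pi_1W})$ coming from $W\to M$ and $W\to N$. If one could push this constraint down to the finite group $\Out(\pi_1W)=\Isom(W)$ --- concluding that the corresponding cyclic subgroups of $\Isom(W)$ are conjugate --- then $M=W/(\Z/n)$ and $N=W/(\Z/n)$ would be isometric, whence $S^3\smallsetminus K'\cong S^3\smallsetminus K$ and $K'$ is equivalent to $K$ by the knot complement theorem of Gordon and Luecke, as required. \textbf{The main obstacle} is exactly this profinite-to-discrete step: one needs that finite cyclic subgroups of $\Out(\pi_1W)$ which are conjugate in $\Out(\wh{\pi_1W})$ are already conjugate in $\Out(\pi_1W)$, and I would hope to obtain this for the specific manifold $W$ by exploiting the very short list of commensurable knots supplied by \cite{boileau_knot_2012}.
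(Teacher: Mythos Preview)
Your first two paragraphs reproduce the paper's argument exactly: you use \cite[Theorem~7.2]{bridson_profinite_2015} to match the fibres, then apply Theorem~\ref{generalthm} under the hypothesis of Question~\ref{generalqmod} to obtain a common finite cyclic cover $W$ of the same degree over $M$ and $N$, and you correctly conclude that $M$ and $N$ are commensurable with $\mathrm{vol}(M)=\mathrm{vol}(N)$.

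The gap is in your final paragraph. Having established equal volume, you already have what you need, but you do not use it: Theorem~1.7 of \cite{boileau_knot_2012} (not merely the ``at most three knots'' statement) asserts that the knot complements in the commensurability class of a hyperbolic knot without hidden symmetries have pairwise \emph{distinct} volumes. Combined with $\mathrm{vol}(M)=\mathrm{vol}(N)$, this forces $M\cong N$ immediately, and Gordon--Luecke finishes. The paper's proof is exactly this one-line conclusion. By contrast, your proposed route---showing that the two deck-transformation cyclic subgroups of $\Out(\pi_1W)$ become conjugate once they are conjugate in $\Out(\wh{\pi_1W})$---is a genuinely harder statement that you yourself flag as unresolved, and it is unnecessary here. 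Replace the third paragraph by the volume argument and the proof is complete.
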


\begin{proof}~If there were such a $K'$, then by \cite[Theorem
  7.2]{bridson_profinite_2015}  $K'$ would be fibred with fibre of the same genus. 
  A positive answer to Question \ref{generalqmod} would imply that the complements of $K$ and $K'$
  had a common finite cyclic cover of the same degree (in the light of Theorem 2.4).
  In particular the knot groups would
  be commensurable and the complements would have the same volume. But Theorem 1.7 of \cite{boileau_knot_2012} shows that the complements in the commensurability class of a hyperbolic knot that has no hidden symmetries each have
 a different volume.
\end{proof}


\begin{corollary}
\label{profinite_knots2}
Let $K\subset S^3$ be a hyperbolic knot that admits a Lens Space surgery.
If Question \ref{generalqmod} has a positive answer and $K$ has no hidden symmetries, 
then there is no other hyperbolic knot $K'$
such that $\pi_1(S^3\smallsetminus K)$ and $\pi_1(S^3\smallsetminus K')$ have the
same profinite completion. 
\end{corollary}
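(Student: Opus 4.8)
The plan is to reduce this statement to Proposition \ref{profinite_knots1}. That proposition already handles all \emph{fibred} hyperbolic knots with no hidden symmetries, assuming a positive answer to Question \ref{generalqmod}. So the only thing that needs to be checked is that a hyperbolic knot $K\subset S^3$ admitting a Lens space surgery is automatically fibred. Once fibredness is established, Proposition \ref{profinite_knots1} applies verbatim and gives the conclusion.

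The fibredness of Lens space knots is a theorem that does not need to be reproved here: by the cyclic surgery theorem of Culler--Gordon--Luecke--Shalen, a Lens space surgery on a hyperbolic knot in $S^3$ is an integral surgery, and then it is a consequence of the resolution of the Berge conjecture machinery coming from Heegaard Floer homology --- specifically, Ozsv\'ath--Szab\'o's theorem that knots in $S^3$ admitting a Lens space surgery have $\widehat{HFK}$ of the simplest possible form, combined with Ghiggini's and Ni's theorem that a knot whose knot Floer homology detects fibredness (in particular, whose top Alexander-grading group has rank $1$) is fibred. Thus any hyperbolic knot with a Lens space surgery is a fibred knot.

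So the steps are: (i) invoke that a hyperbolic knot $K\subset S^3$ with a Lens space surgery is fibred; (ii) observe that $K$ has no hidden symmetries by hypothesis; (iii) apply Proposition \ref{profinite_knots1} with this $K$ to conclude that, assuming a positive answer to Question \ref{generalqmod}, no other hyperbolic knot $K'$ has $\pi_1(S^3\smallsetminus K')$ with the same profinite completion as $\pi_1(S^3\smallsetminus K)$.

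The only real content is step (i), and that is entirely a citation to known work in Heegaard Floer homology rather than something to be carried out here; there is no genuine obstacle within the scope of this paper. One minor point of hygiene: the hypothesis in Proposition \ref{profinite_knots1} that $K'$ is \emph{hyperbolic} is already built into the statement of the corollary, so no separate geometrization argument for $K'$ is needed, and the volume-rigidity input from \cite{boileau_knot_2012} used inside Proposition \ref{profinite_knots1} carries over unchanged.
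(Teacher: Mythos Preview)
Your proposal is correct and follows essentially the same approach as the paper: reduce to Proposition~\ref{profinite_knots1} by observing that a hyperbolic knot admitting a Lens space surgery is fibred. The paper simply cites Ni~\cite{ni_knot_2007} for this fact rather than tracing the full Heegaard Floer lineage, but the logical structure is identical.
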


\begin{proof}
The result follows from Proposition \ref{profinite_knots1} on noting that
Y. Ni \cite{ni_knot_2007} proved that
a (hyperbolic) knot that admits a Lens Space surgery is fibred. 
\end{proof}

\begin{remark}
\noindent (i)~Theorem 1.4 of \cite{boileau_knot_2012} establishes that if the complements of knots without
hidden symmetries are commensurable, then they are actually
cyclically commensurable (in line with our results). 

\smallskip

(ii)~We regard Proposition \ref{profinite_knots1} as giving further credence to the belief that
hyperbolic knot groups are profinitely rigid. This belief is in keeping
with a theme that has recently emerged in low-dimensional topology and Kleinian
groups exploring the extent to which the fundamental group of a finite volume
hyperbolic 3-manifold is determined by the geometry and topology of its
finite covers. An aspect of this is the way that
a ``normalized" version of $|\Tor(H_1(M,{\Bbb Z}))|$ behaves in finite covers;
it is conjectured that this should determine the volume of the manifold. Since 
$\Tor(H_1(M,{\Bbb Z}))$ is detected at the level of the profinite completion,
the volume is thus conjectured to be a profinite invariant.
\end{remark}

Turning to the case of $\Out(F_n)$ we can ask:

\begin{question}\label{generalqout}
Let $F_n$ be the non-abelian free group of rank $n$. Might
the set of fully irreducible automorphisms in $\Out(F_n)$ 
be $F_n$-congruence omnipotent? What about the set of hyperbolic automorphisms?
\end{question}

As above, a positive answer to Question
\ref{generalqout} would imply that hyperbolic mapping tori
$F_n\rtimes\Z$ with $b_1=1$ and the same profinite genus are
commensurable.

\begin{appendices}
\section{Appendix: Computations in $GL(2,\mathbb{Z}$)}

The action of ${\rm{Aut}}(F_2)$ on $H_1(F_2,\Z)$ gives an epimorphism $\Aut(F_2)\to {\rm{GL}}(2,\Z)$ whose kernel  is the group of inner automorphisms. The isomorphism type of $\G_\phi$ depends only on the  conjugacy class of the image of $\phi$ in $\Out(F_2)={\rm{GL}}(2,\Z)$, so we may regard $\phi$ as an element of ${\rm{GL}}(2,\Z)$. We remind the reader that finite-order elements of ${\rm{GL}}(2,\Z)$ are termed {\em{elliptic}},  infinite-order elements with an eigenvalue of absolute value bigger than $1$ are {\em{hyperbolic}}, and the other infinite-order elements are {\em{parabolic}}. 

In this appendix, we collect various standard facts about the algebra of $GL(2,\Z)$.  Each can be checked using elementary algebra (or more elegantly, in some cases, using the action of $\PSL \cong \Z/2\ast\Z/3$
on the dual tree to the Farey tesselation of the hyperbolic plane).  
  The first such fact concerns the uniqueness of roots.

\begin{lemma}\label{l:roots}
If $\phi,\psi\in\GL$ are elements of infinite order and $\phi^n=\psi^n$ for some $n\neq 0$,
then $\phi=\pm \psi$.
\end{lemma}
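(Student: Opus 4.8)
The plan is to exploit the structure of $\GL(2,\Z)$ via its action on the Bass--Serre tree for the amalgam decomposition $\PSL \cong \Z/2 \ast \Z/3$, together with a short argument about centralizers. First I would reduce to $\SL(2,\Z)$: if $\phi$ has determinant $-1$, then $\phi^2 \in \SL(2,\Z)$, and I can analyze the situation by passing to squares, being careful that $\psi$ then also has determinant $-1$ (since $\phi^n = \psi^n$ forces $(\det\phi)^n = (\det\psi)^n$, and for $n$ odd this pins down the sign; for $n$ even one must work slightly harder, replacing $n$ by a suitable multiple so that the two cases align). So the crux is the case $\phi,\psi \in \SL(2,\Z)$ of infinite order with $\phi^n = \psi^n$.

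The key step is to locate $\psi$ inside the centralizer of $\phi^n$ in $\GL(2,\Z)$, and to show that centralizer is as small as possible. An infinite-order element $\phi \in \SL(2,\Z)$ is either hyperbolic or parabolic. In the hyperbolic case, $\phi$ is (over $\R$) diagonalizable with irrational eigenvalues generating a real quadratic field, and its centralizer in $\GL(2,\R)$ is the group of matrices sharing those eigenvectors; intersecting with $\GL(2,\Z)$, the centralizer of $\phi$ — and equally of $\phi^n$, since $\phi^n$ has the same eigenvectors — is the unit group of an order in a real quadratic field together with $\pm I$, which by Dirichlet is $\{\pm 1\}\times\Z$, i.e.\ $\langle -I, \phi_0\rangle$ for a generator $\phi_0$. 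In the parabolic case, $\phi$ is conjugate to $\pm\begin{pmatrix}1 & k\\ 0 & 1\end{pmatrix}$; its centralizer in $\GL(2,\Z)$ is $\langle -I, \begin{pmatrix}1 & 1\\ 0 & 1\end{pmatrix}\rangle$ (again checked by a direct computation on the stabilizer of the fixed point on $\partial\H$ or on the fixed vector). In either case the centralizer is isomorphic to $\Z/2 \times \Z$, so $\psi$ lies in this abelian group. Writing $\psi = \pm \phi_0^a$ and $\phi = \pm \phi_0^b$ in terms of the infinite-order generator $\phi_0$, the equation $\phi^n = \psi^n$ becomes $(\pm 1)^n \phi_0^{bn} = (\pm 1)^n \phi_0^{an}$, forcing $an = bn$, hence $a = b$; thus $\psi = \pm\phi$, which is what we wanted (the two sign ambiguities combine, but since we only claim $\phi = \pm\psi$ this is fine).

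Alternatively, and perhaps more cleanly for the parabolic and elliptic-on-the-tree bookkeeping, one can argue directly with the action of $\PSL \cong \Z/2 \ast \Z/3$ on its Bass--Serre tree $T$: an infinite-order element acts as a hyperbolic isometry with an axis, $\phi$ and $\phi^n$ share the same axis, and any element commuting with $\phi^n$ preserves that axis; the subgroup of $\PSL$ stabilizing a line in $T$ setwise is infinite cyclic (it acts on the line by translations, and the kernel of that action is trivial since edge stabilizers are trivial), which gives the same conclusion after lifting from $\PSL$ to $\SL(2,\Z)$ and accounting for $\pm I$. I would present whichever of these is shortest.

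The main obstacle I anticipate is purely bookkeeping rather than conceptual: handling the determinant $-1$ case and the interplay of the two independent $\pm$ signs (one from $\det = -1$, one from $-I$ in the centralizer) without the argument ballooning, and making sure the reduction "replace $n$ by a multiple" is legitimate — it is, because $\phi^n = \psi^n$ implies $\phi^{nm} = \psi^{nm}$ for all $m$, so we are free to assume $n$ is, say, even, which puts both $\phi^n$ and $\psi^n$ in $\SL(2,\Z)$ regardless of the original determinants. Once inside $\SL(2,\Z)$, everything is forced by the smallness of centralizers.
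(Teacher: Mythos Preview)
Your proposal is correct, and in fact you offer two approaches. Your alternative via the Bass--Serre tree for $\PSL\cong\Z/2\ast\Z/3$ is precisely what the paper has in mind: the paper does not spell out a proof but points to exactly this action, observing that infinite-order elements have unique axes and that the setwise stabilizer of a line in $T$ is infinite cyclic (or trivial) in $\PSL$, hence roots are unique there; passing to $\PGL$ the stabilizer can become infinite dihedral, but roots remain unique in the infinite dihedral group; lifting to $\GL$ then introduces only the ambiguity $\pm I$.

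Your primary approach via centralizers is a genuine alternative. Identifying $\Q[\phi]$ with a real quadratic field (in the hyperbolic case) and invoking Dirichlet's unit theorem to get the centralizer $\cong\Z/2\times\Z$ is heavier machinery than the tree argument but has the virtue of being entirely algebraic and of making the structure of the centralizer completely explicit. One comment: your ``reduction to $\SL(2,\Z)$'' is unnecessary and slightly muddies the exposition. Since you compute the centralizer of $\phi^n$ in $\GL(2,\Z)$ anyway, and both $\phi$ and $\psi$ lie in that centralizer regardless of their determinants, the argument runs cleanly without ever restricting to $\SL$; the paragraph about passing to even $n$ and worrying about determinants can simply be deleted. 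The paper's route, by contrast, handles the determinant $-1$ case by working in $\PGL$ rather than $\PSL$, which is tidier.
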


We next recall the classification of non-hyperbolic elements of $GL(2,\Z)$, up to conjugacy.

\begin{proposition}\label{p:types}
Every non-hyperbolic element of ${\rm{GL}}(2,\Z)$ is conjugate to exactly one of the following
elements:
\begin{enumerate}
\item $\pm I$;
\item $\theta=\begin{pmatrix} -1& -1\cr 1& 0\cr\end{pmatrix}$, which has order $3$;
\item $-\theta$, which has order $6$;
\item $\epsilon=\begin{pmatrix} -1& 0\cr 0&  1\cr\end{pmatrix}$ or $\tau=\begin{pmatrix} 0& 1\cr 1& 0\cr\end{pmatrix}$, which have order $2$ and are not conjugate to each other,
\item $\epsilon\tau$, which has order $4$;
\item $U(n)=\begin{pmatrix} 1& n\cr 0&  1\cr\end{pmatrix}$ with $n>0$;
\item $-U(n)$ with $n>0$.
\end{enumerate}
\end{proposition}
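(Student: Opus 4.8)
The plan is to treat parabolic (infinite-order) and elliptic (finite-order) elements separately, and in each case to first list the possible characteristic polynomials using only the constraints $\det\phi\in\{\pm1\}$ and the location of the eigenvalues, and then to refine the resulting classification from $\Q$-conjugacy to $\GL$-conjugacy. Recall that for a non-scalar $2\times2$ matrix the characteristic polynomial equals the minimal polynomial, so $\Q$-conjugacy of non-scalar matrices just means having equal characteristic polynomials; the real content of the proposition is that the finer relation of $\GL$-conjugacy is detected by a handful of extra integral invariants.

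\emph{Parabolic case.} Write the characteristic polynomial of $\phi$ as $x^2-(\tr\phi)x+\det\phi$. If $\det\phi=-1$ the eigenvalues are real with product $-1$, hence (being of modulus $1$) equal to $1$ and $-1$, so $\phi$ is a diagonalisable involution and not parabolic. Hence $\det\phi=1$, the eigenvalues $\lambda,\lambda^{-1}$ lie on the unit circle, and infinite order forces $\phi$ to be non-diagonalisable, so $\lambda=\pm1$ and $\tr\phi=\pm2$. After replacing $\phi$ by $-\phi$ if necessary we may take $\tr\phi=2$; then $(\phi-I)^2=0$ and $\phi\ne I$, so $\phi-I$ is a nonzero nilpotent integer matrix whose kernel is a primitive rank-one sublattice of $\Z^2$. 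Choosing a basis $e_1,e_2$ of $\Z^2$ with $e_1$ spanning that kernel, and replacing $e_2$ by $-e_2$ if needed, exhibits $\phi$ as $U(n)$ with $n=[\ker(\phi-I):(\phi-I)\Z^2]>0$. This index is visibly a conjugacy invariant and determines the class, and conjugation by $\mathrm{diag}(1,-1)\in\GL$ identifies $U(n)$ with $U(-n)$; so the parabolic classes are exactly $\{U(n):n>0\}$ and $\{-U(n):n>0\}$, with the trace distinguishing the two families and $n$ distinguishing the members of each.

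\emph{Elliptic case.} Now the eigenvalues are roots of unity. If $\det\phi=-1$ they are Galois-conjugate roots of unity of product $-1$, which forces $\tr\phi=0$ and characteristic polynomial $x^2-1$. If $\det\phi=1$ the eigenvalues form a conjugate pair on the unit circle, so $\tr\phi\in\{-2,-1,0,1,2\}$ and the characteristic polynomial is one of $(x-1)^2$, $(x+1)^2$, $x^2+1$, $x^2+x+1$, $x^2-x+1$. A finite-order element with characteristic polynomial $(x\mp1)^2$ must be $\pm I$, since otherwise $\mp\phi=I+N$ with $N$ a nonzero nilpotent and so has infinite order. For each characteristic polynomial that is irreducible over $\Q$, the lattice $\Z^2$ becomes a module over the quadratic ring $\Z[i]$, $\Z[\omega]$, or $\Z[\zeta_6]$ respectively; each of these is a principal ideal domain, so $\Z^2$ is the unique rank-one free module and there is a single $\GL$-conjugacy class, represented by $\epsilon\tau$ (order $4$), $\theta$ (order $3$), $-\theta$ (order $6$). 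For $x^2-1$ the ring $\Z[x]/(x^2-1)$ is not a domain and the ideals $(x-1),(x+1)$ fail to be comaximal (their sum has index $2$), so there are exactly two module structures, realised by $\tau$ and $\epsilon$; the cleanest invariant distinguishing these two involutions is that $\ker(\phi-I)\oplus\ker(\phi+I)$ has index $1$ in $\Z^2$ for $\epsilon$ but index $2$ for $\tau$.

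Finally I would verify that the seven families are pairwise non-conjugate: the order separates $I$, the three involutions $\{-I,\epsilon,\tau\}$, $\theta$, $\epsilon\tau$, $-\theta$ and the two infinite-order families; the determinant separates $-I$ from $\{\epsilon,\tau\}$; the kernel-index invariant above separates $\epsilon$ from $\tau$; and the trace together with the index $n$ separates the parabolic representatives. The one genuine obstacle is this passage from rational to integral conjugacy, i.e. organising the module-theoretic input — that $\Z[i]$ and $\Z[\omega]$ have class number one and that $\Z[x]/(x^2-1)$ supports exactly two relevant module structures; the trace bound, the nilpotent normal form and the invariant bookkeeping are all routine linear algebra. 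Alternatively the whole argument can be run geometrically via the action of $\PGL$ (which is virtually $\Z/2\ast\Z/3$) on the tree dual to the Farey tessellation: an elliptic element fixes a vertex or an edge and so is conjugate into one of the finitely many vertex or edge stabilisers, while a parabolic element fixes a unique end, with $n$ read off the horocyclic combinatorics — but I would only sketch this route.
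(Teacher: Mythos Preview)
Your proof is correct. The paper does not actually prove this proposition: it is presented as one of several ``standard facts about the algebra of $\GL$'' that ``can be checked using elementary algebra (or more elegantly, in some cases, using the action of $\PSL\cong\Z/2\ast\Z/3$ on the dual tree to the Farey tesselation of the hyperbolic plane).'' So there is no detailed argument to compare against.

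That said, your route through the elliptic case is genuinely different from the geometric one the paper gestures at (and which you mention as an alternative at the end). Where the tree argument classifies finite-order elements by conjugating them into vertex or edge stabilisers---equivalently, into the automorphism groups $\Z/2\wr\Z/2$ and $\Sigma_3\times\{\pm I\}$ of the rose and the theta graph in the spine of Outer Space---you instead run a Latimer--MacDuffee style argument: view $\Z^2$ as a module over $\Z[x]/(p(x))$ for each candidate characteristic polynomial $p$, and invoke class number one for $\Z[i]$ and $\Z[\omega]$ to get uniqueness in the irreducible cases. This is conceptually cleaner and generalises better, at the cost of importing a little algebraic number theory; the tree argument is more hands-on but stays entirely within elementary group theory. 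Your treatment of the parabolic case via the invariant $n=[\ker(\phi-I):(\phi-I)\Z^2]$ is essentially the same as any direct computation. The one point you rightly flag as needing care---that $\Z[x]/(x^2-1)$ supports exactly two faithful torsion-free modules of $\Z$-rank $2$, distinguished by the index $[\Z^2:\ker(\phi-I)\oplus\ker(\phi+I)]$---is indeed the only step with real content, and your invariant is the correct one.
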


From the obvious presentation $\G_\phi = \<a,b,t \mid tat^{-1}=\phi(a),\ tat^{-1}=\phi(b)\>$
we get the presentation 
$$
H_1(\G_\phi,\Z) = \<a,b,t \mid [a,b]=[a,t]=[b,t]=1 = a^{-1}\phi(a)=b^{-1}\phi(b)\>$$
from which it is easy to calculate the following.

\begin{lemma}\label{l:H1}
With the notation of Proposition \ref{p:types}:
\begin{enumerate}
\item $H_1(\G_I,\Z) \cong\Z^3$ and $H_1(\G_{-I},\Z) \cong \Z\oplus \Z/2\oplus \Z/2$;
\item $H_1(\G_{\theta},\Z) \cong \Z\oplus \Z/3$;
\item $H_1(\G_{-\theta},\Z) \cong \Z$;
\item $H_1(\G_{\epsilon},\Z) \cong \Z^2\oplus \Z/2$  and $H_1(\G_{\tau},\Z) \cong \Z^2$;
\item $H_1(\G_{\epsilon\tau},\Z) \cong \Z\oplus \Z/2$;
\item $H_1(\G_{U(n)},\Z) = \Z^2\oplus \Z/n$ if $n>0$;
\item $H_1(\G_{-U(n)},\Z) = \Z\oplus \Z/4 $ if $n$ odd, and $H_1(\G_{-U(n)}) = \Z\oplus \Z/2\oplus\Z/2$ if $n$ even.
\end{enumerate}
\end{lemma}

\end{appendices}

 \bibliographystyle{plain}

 \Addresses

\end{document}